\newtheorem{theorem}{Theorem}[section]
\newtheorem{corollary}[theorem]{Corollary}
\newtheorem{definition}{Definition}[section]
\newtheorem{proposition}[theorem]{Proposition}
\newcommand{\two}{\textnormal{\emph{2}}}
\newcommand{\oalph}{\beta}
\newcommand{\muu}{\text{\sfrac{1}{2}}}
 \author{J. P. Fatelo and N. Martins-Ferreira}
 \address{IPL}
 \email{martins.ferreira@ipleiria.pt}
\title[]{Mobi algebra as an abstraction to the unit interval and its comparison to rings}
 \subjclass[2010]{Primary 08C15; Secondary 20N02}
 \keywords{Mobi algebra, involutive medial monoid, midpoint algebras, unitary rings, unit interval, ternary operation, geodesic path}
\thanks{This research work was supported by the Portuguese Foundation for Science and Technology (FCT) through the 
Project reference UID/ Multi/04044/2013,
post doctoral grant SFRH/BPD/43216/2008 at CMUC,
and also by CDRSP and ESTG from the Polytechnic Institute of Leiria.
}
\begin{document}

\begin{abstract}

We begin by introducing an algebraic structure with three constants and one ternary operation to which we call mobi algebra. This structure has been designed  to capture the most relevant properties of the unit interval that are needed in the study of geodesic paths. Another algebraic structure, called involutive medial monoid (IMM), can be  derived from a mobi algebra. We prove several results on the interplay between mobi algebras, IMM algebras and unitary rings. It turns out that every unitary ring with one half uniquely determines and is uniquely determined by a mobi algebra with one double. This paper is the second of a planned series of papers dedicated to the study of geodesic paths from an algebraic point of view, the first paper in the series is \cite{ccm_magmas}.

\end{abstract}

\maketitle


\section{Introduction}

This paper is part of an ongoing work which aims at an axiomatic  characterization of spaces with unique geodesic paths between any two of its points.
Our interest in the study of geodesic paths from an algebraic point of view has started with an observation on weakly Mal'cev categories \cite{NMF.08}.
 The observation is that a certain ternary operation, say $p=p(x,y,z)$, which arises in the context of weakly Mal'tsev quasi-varieties \cite{NMF.15a}, could be used to describe the path of a particle moving in space along a geodesic curve from a point $x$ to a point $z$ at an instant $y$. 
By fixing $y$ to a value that positions the particle at half way from $x$ to $z$, a binary operation is obtained. The study of this binary operation was a first step in our investigation \cite{ccm_magmas}.

Allowing the variable $y$ to range over the unit interval we are able to capture the whole movement of a particle in a geodesic path. The main purpose of this paper is to communicate the structure of a mobi algebra (Definition \ref{mobi_algebra}), which has arisen as a candidate for an algebraic version of the unit interval. 
It may be argued that this study could have been avoided if we would have decided to simply work within the unit interval. The reason why we have chosen to take this study further has to do with the fact that the axioms of Definition \ref{mobi_algebra} can be naturally extended into higher dimensions. This means that a mobi algebra is at the same time a model for the unit interval and an instance of a one-dimensional space with geodesics.
However, this is just an intermediate step in our study. 
The notion which we are aiming for in the future is the one of a mobi space.   In some sense, \emph{a mobi space is to a mobi algebra in the same way as a module over a ring is to the ring of its scalars}.
For the moment we concentrate our attention on the unit interval as an abstract range of scalars. Here the slogan is \emph{the unit interval is to a mobi algebra in the same way as the real line is to a unitary ring}.  

The idea of abstracting the unit interval is not new and has been considered extensively in the literature (see e.g. \cite{Escardo} and the references therein). After some attempts and different experiments \cite{GTLab59}, it now appears to us that the structure which we are calling mobi algebra is a suitable abstraction for the unit interval capturing the main features we are interested in. It consists of a set $A$ equipped with a  ternary operation $p$ together with three constants $0,\muu,1\in A$, and satisfying the eight axioms of Definition \ref{mobi_algebra}. 
One of the reasons that have convinced us about this structure's significance is its deep connection with unitary rings (see diagram (\ref{casinha}) at the end). In summary, every unitary ring in which the element $2$ is invertible determines a unique structure of mobi algebra. In addition, a mobi algebra in which the element $\muu$ is invertible (in a suitable sense) determines a unique unitary ring structure. 

As expected, the prototyping example of a mobi algebra is the unit interval of the real numbers with the three constants $0$,  $\frac{1}{2}$, $1$,
 and the ternary operation $p(x,y,z)=(1-y)x+yz$. 
More generally, if $R$ is a unitary ring in which the element $2$ is invertible then any subset $A\subseteq R$, containing $0$, $2^{-1}$, $1$ and closed under the formula $x+yz-yx$, gives rise to a mobi algebra structure on the set $A$.

This paper is organized as follows. In Section \ref{sec_mobi} we give the definition of a mobi algebra and present some basic properties. Examples are given in Section \ref{sec_examples} and in the Appendix. In Section \ref{sec_derived}, we observe that a mobi algebra gives rise to several other structures. Namely, two monoid structures $(A,\circ,0)$, $(A,\cdot,1)$, dual to each other, one midpoint algebra $(A,\oplus)$ and an involution $\overline{()}\colon{A\to A}$. These derived operations satisfy some axioms and form what we call an involutive medial monoid, IMM for short. 
The importance of this structure is highlighted in Section \ref{sec_IMM-ring} where its relation to unitary rings is shown. 
In Section \ref{sec_ring} we characterize those IMM that are obtained from a mobi algebra. Our main result is Theorem \ref{THM-mobi-ring}
stating that every unitary ring with $2^{-1}$ uniquely determines and is uniquely determined by a mobi algebra with $\muu^{-1}$.


\section{Definition and basic properties}\label{sec_mobi}

A mobi algebra consists of three constants and one ternary operation. The ternary operation $p(x,y,z)$ is thought of as being the point, in a path linking $x$ to $z$, which lies at a location $y$ in-between. 
This image might provide some intuition and the reader is invited to keep in mind either one of the two formulas $p=(1-y)x+yz$ or $p=x+y(z-x)$. The motivating example is the unit internal $[0,1]$, with the three constants $0$, $\frac{1}{2}$, $1$ and  the formula $p(x,y,z)=x+yz-yx$. Note that we will consistently distinguish between the element $\muu$, which is only used as a symbol, and the real number $\frac{1}{2}\in [0,1]$.


\begin{definition}\label{mobi_algebra}
A mobi algebra is a system $(A,p,0,\muu,1)$, in which $A$ is a set, $p$ is a ternary operation and $0$, $\muu$ and $1$ are elements of A, that satisfies the following axioms:
\begin{enumerate}[label={\bf (A\arabic*)}]
\item\label{alg_mu} $p(1,\muu,0)=\muu$
\item\label{alg_01} $p(0,a,1)=a$
\item\label{alg_idem} $p(a,b,a)=a$
\item\label{alg_0} $p(a,0,b)=a$
\item\label{alg_1} $p(a,1,b)=b$
\item\label{alg_cancel} $p(a,\muu,b)=p(a',\muu,b)\implies a=a'$
\item\label{alg_homo} $p(a,p(c_1,c_2,c_3),b)=p(p(a,c_1,b),c_2,p(a,c_3,b))$
\item\label{alg_medial}
$p(p(a_1,c,b_1),\muu,p(a_2,c,b_2))=p(p(a_1,\muu,a_2),c,p(b_1,\muu,b_2))$
\end{enumerate}
\end{definition}


The ternary operation $p$ is not associative, not even partially. Nevertheless, it verifies several properties that involve an interaction of $p$ with itself, as exemplified in the next Proposition. 

\begin{proposition}\label{properties_p_1} Let $(A,p,0,\muu,1)$ be a mobi algebra. It follows that:
\begin{eqnarray}
\label{P11} p(a,p(0,c,d),b)&=& p(a,c,p(a,d,b)) \\
\label{P12} p(a,p(1,c,d),b)&=& p(b,c,p(a,d,b)) \\
\label{P13} p(a,p(c,d,0),b)&=& p(p(a,c,b),d,a) \\
\label{P14} p(a,p(c,d,1),b)&=& p(p(a,c,b),d,b) 
\end{eqnarray}
\end{proposition}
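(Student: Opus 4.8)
The plan is to read all four identities as instances of a single mechanism: axiom \ref{alg_homo} lets one push the operation $p(a,-,b)$ through a $p$ sitting in its middle slot, rewriting $p(a,p(c_1,c_2,c_3),b)$ as $p(p(a,c_1,b),c_2,p(a,c_3,b))$. Each of the four left-hand sides already presents its middle argument as exactly such a nested $p$, whose first or third slot is one of the constants $0$ or $1$. So the whole proof reduces to: apply \ref{alg_homo}, then collapse the resulting outer sub-term $p(a,0,b)$ or $p(a,1,b)$ using the zero law \ref{alg_0} and the unit law \ref{alg_1}.

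Concretely, for \eqref{P11} I would instantiate \ref{alg_homo} with $(c_1,c_2,c_3)=(0,c,d)$, obtaining $p(p(a,0,b),c,p(a,d,b))$, and then replace $p(a,0,b)$ by $a$ via \ref{alg_0}. For \eqref{P12} the same instantiation with $(1,c,d)$ produces $p(p(a,1,b),c,p(a,d,b))$, and \ref{alg_1} turns $p(a,1,b)$ into $b$. The identities \eqref{P13} and \eqref{P14} are the symmetric statements in which the constant sits in the third slot instead: instantiating \ref{alg_homo} with $(c,d,0)$ and with $(c,d,1)$ gives $p(p(a,c,b),d,p(a,0,b))$ and $p(p(a,c,b),d,p(a,1,b))$ respectively, and \ref{alg_0}, \ref{alg_1} reduce the trailing $p(a,0,b)$ and $p(a,1,b)$ to $a$ and $b$.

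I do not expect a genuine obstacle here: all four are immediate corollaries of \ref{alg_homo} combined with \ref{alg_0} and \ref{alg_1}, and none of the subtler axioms (the cancellation law \ref{alg_cancel} or the medial law \ref{alg_medial}) are needed. The only point requiring attention is the clerical bookkeeping of which argument slot receives the constant, since \eqref{P11}--\eqref{P12} place it in the first slot while \eqref{P13}--\eqref{P14} place it in the third; matching these correctly is essentially all that distinguishes the four cases.
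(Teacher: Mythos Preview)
Your proof is correct and matches the paper's own argument exactly: the paper likewise states that all four identities follow directly from axiom \ref{alg_homo} together with \ref{alg_0} or \ref{alg_1}. Your detailed case-by-case instantiations are precisely what that one-line proof unpacks to.
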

\begin{proof}
These properties are obtained directly from axiom\ref{alg_homo} and the use of \ref{alg_0} or \ref{alg_1}.
\end{proof}

Fixing some other elements in the previous relations, we get further properties. For example, from (\ref{P12}), we get the following result.
\begin{corollary}\label{properties_p_2} If $(A,p,1,\muu,0)$ is a mobi algebra, then:
\begin{eqnarray}
\label{P21} p(a,p(1,c,0),b)&=& p(b,c,a) \\
\label{P22} p(a,\muu,b)&=& p(b,\muu,a)\\
\label{P23} p(1,p(1,c,0),0)&=& c
\end{eqnarray}
\end{corollary}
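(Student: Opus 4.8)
The plan is to obtain all three identities as specializations of the single relation (\ref{P12}), namely $p(a,p(1,c,d),b)=p(b,c,p(a,d,b))$, together with the base axioms of Definition \ref{mobi_algebra}. First I would establish (\ref{P21}) by setting $d=0$ in (\ref{P12}): the left-hand side becomes $p(a,p(1,c,0),b)$, while the inner term $p(a,0,b)$ on the right simplifies to $a$ by axiom \ref{alg_0}, so the whole right-hand side collapses to $p(b,c,a)$. This yields $p(a,p(1,c,0),b)=p(b,c,a)$, which is (\ref{P21}) and which I regard as the key intermediate identity: it expresses that the \emph{reversed} parameter $p(1,c,0)$, evaluated between the endpoints $a$ and $b$, is the same as feeding $c$ with those endpoints exchanged.

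With (\ref{P21}) in hand the remaining two identities are immediate specializations. For (\ref{P22}) I would put $c=\muu$ in (\ref{P21}); the middle argument $p(1,\muu,0)$ equals $\muu$ by axiom \ref{alg_mu}, so the identity reads $p(a,\muu,b)=p(b,\muu,a)$, i.e. the midpoint-type operation $p(-,\muu,-)$ is commutative. For (\ref{P23}) I would instead set $a=1$ and $b=0$ in (\ref{P21}); the right-hand side becomes $p(0,c,1)$, which collapses to $c$ by axiom \ref{alg_01}, giving $p(1,p(1,c,0),0)=c$, i.e. the unary map $c\mapsto p(1,c,0)$ is an involution.

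There is no serious obstacle here: the whole argument is a matter of choosing the correct specializations of (\ref{P12}) and reading off the simplifications licensed by axioms \ref{alg_mu}, \ref{alg_01} and \ref{alg_0}. The only point that requires minor care is the bookkeeping of argument positions, in particular noticing that the endpoint swap $a\leftrightarrow b$ produced on the right of (\ref{P21}) is exactly what makes both the commutativity (\ref{P22}) and the involution (\ref{P23}) fall out as stated, rather than some weaker variant. I read the hypothesis as an ordinary mobi algebra, so that the conclusions of Proposition \ref{properties_p_1} — and (\ref{P12}) in particular — are available for these substitutions.
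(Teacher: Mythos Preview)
Your proposal is correct and matches the paper's own proof essentially line for line: the paper also derives all three identities from (\ref{P12}) together with axioms \ref{alg_0}, \ref{alg_mu}, and \ref{alg_01} respectively, exactly the specializations you describe. The only cosmetic difference is that you explicitly route (\ref{P22}) and (\ref{P23}) through the intermediate identity (\ref{P21}), whereas the paper phrases each as a direct consequence of (\ref{P12}); the underlying computations are identical.
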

\begin{proof}
These properties are an immediate consequence of (\ref{P12}) and, respectively, axioms\ref{alg_0}, \ref{alg_mu}, and \ref{alg_01}.
\end{proof}


We finish this section with three more  properties of a mobi algebra structure.

\begin{proposition}\label{properties_p_4} Let $(A,p,0,\muu,1)$ be a mobi algebra. It follows that:
\begin{eqnarray}
\label{P41} p(p(a_1,c,b_1),\muu,p(a_2,c,b_2))&=& p(p(a_2,c,b_1),\muu,p(a_1,c,b_2))\hspace{10pt}\\
\label{P42} p(p(a,c,b),\muu,p(b,c,a))&=& p(a,\muu,b)\\
\label{P43} p(p(a,p(1,c,0),b),\muu,p(a,c,d))&=&p(a,\muu,p(b,c,d)). 
\end{eqnarray}
\end{proposition}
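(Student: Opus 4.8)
The plan is to derive all three identities from the medial axiom \ref{alg_medial}, the symmetry of the midpoint operation (\ref{P22}), and idempotency \ref{alg_idem}, with (\ref{P43}) additionally drawing on (\ref{P21}). In each case the strategy is to bring both sides into the common normal form $p(p(\cdot,\muu,\cdot),c,p(\cdot,\muu,\cdot))$ by applying the medial law, and then to reconcile the inner midpoints using their symmetry.

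For (\ref{P41}) I would apply \ref{alg_medial} to each side separately. The left-hand side becomes $p(p(a_1,\muu,a_2),c,p(b_1,\muu,b_2))$, while the right-hand side becomes $p(p(a_2,\muu,a_1),c,p(b_1,\muu,b_2))$; these coincide at once because (\ref{P22}) gives $p(a_1,\muu,a_2)=p(a_2,\muu,a_1)$. This case is immediate. For (\ref{P42}) I would again apply \ref{alg_medial} to the left-hand side, now with the roles $a_1=a$, $b_1=b$, $a_2=b$, $b_2=a$, obtaining $p(p(a,\muu,b),c,p(b,\muu,a))$. Using (\ref{P22}) to rewrite $p(b,\muu,a)$ as $p(a,\muu,b)$ turns both outer slots into the same element, after which idempotency \ref{alg_idem} collapses $p(x,c,x)$ to $x=p(a,\muu,b)$, which is exactly the right-hand side.

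The identity (\ref{P43}) is the one requiring the most care, and I expect it to be the main obstacle. First I would use (\ref{P21}) to replace $p(a,p(1,c,0),b)$ by $p(b,c,a)$, turning the left-hand side into $p(p(b,c,a),\muu,p(a,c,d))$; applying \ref{alg_medial} then yields $p(p(b,\muu,a),c,p(a,\muu,d))$, and (\ref{P22}) rewrites the first inner slot as $p(a,\muu,b)$. For the right-hand side $p(a,\muu,p(b,c,d))$, the key trick is to expand the isolated $a$ as $p(a,c,a)$ by idempotency \ref{alg_idem}, so that the whole expression reads $p(p(a,c,a),\muu,p(b,c,d))$; a single application of \ref{alg_medial} then produces $p(p(a,\muu,b),c,p(a,\muu,d))$, matching the transformed left-hand side. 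The delicate point is precisely this recognition that an element standing alone must be re-expanded through idempotency in order to bring the expression into the shape on which the medial axiom can act; once that is seen, the two sides reduce to the identical normal form and the proof closes.
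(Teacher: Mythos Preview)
Your argument is correct and follows essentially the same route as the paper: both derive (\ref{P41}) and (\ref{P42}) by medialising with \ref{alg_medial} and then invoking (\ref{P22}) and \ref{alg_idem}, and for (\ref{P43}) the paper likewise reduces $p(a,p(1,c,0),b)$ to $p(b,c,a)$ (via \ref{alg_homo}, \ref{alg_0}, \ref{alg_1}, which is exactly how (\ref{P21}) was obtained), applies \ref{alg_medial} and (\ref{P22}), and uses the idempotency trick $a=p(a,c,a)$ to match the other side. The only cosmetic difference is that you reduce both sides to a common normal form, whereas the paper transforms one side into the other in a single chain.
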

\begin{proof}
Using \ref{alg_medial}, (\ref{P22}) and again \ref{alg_medial}, we get:
\begin{eqnarray*}
p(p(a_1,c,b_1),\muu,p(a_2,c,b_2))&=&p(p(a_1,\muu,a_2),c,p(b_1,\muu,b_2))\\
                                 &=&p(p(a_2,\muu,a_1),c,p(b_1,\muu,b_2))\\
                                 &=&p(p(a_2,c,b_1),\muu,p(a_1,c,b_2)).
\end{eqnarray*}
Property (\ref{P42}) is just a particular case of (\ref{P41}) if we consider \ref{alg_idem}.\newline
Using \ref{alg_homo},\ref{alg_0}, \ref{alg_1}, \ref{alg_medial}, (\ref{P22}) and \ref{alg_idem}, we get:
\begin{eqnarray*}
& &p(p(a,p(1,c,0),b),\muu,p(a,c,d))\\
&=&p(p(p(a,1,b),c,p(a,0,b)),\muu,p(a,c,d))\\
&=&p(p(b,c,a),\muu,p(a,c,d))\\
&=&p(p(b,\muu,a),c,p(a,\muu,d))\\
&=&p(p(a,\muu,b),c,p(a,\muu,d))\\
&=&p(p(a,c,a),\muu,p(b,c,d))\\
&=&p(a,\muu,p(b,c,d)).
\end{eqnarray*}
\end{proof}

Some of these basic properties will be used to derive the structure introduced in section \ref{sec_derived} (involutive medial monoid) as an intermediate step in the comparison with rings. For the moment we observe some examples.

\section{Examples}\label{sec_examples}

In this section we give examples of mobi algebras by presenting a set $A$, a ternary operation $p(x,y,z)\in A$, for all $x,y,z\in A$, and three constants $0,\muu,1$ in $A$.

Our prototyping example is clearly the unit interval. We observe that it can be defined with the usual ternary operation $p(a,b,c)=(1-b)a+b c$ with one half as the element $\muu$ but it can also be considered in other forms like those of Examples 2 to 5.
Note that Examples 1 and 2 are isomorphic  via the bijection $x\mapsto\frac{x}{2-x}$.
More in general, the bijection $x\mapsto\frac{x}{(\alpha-1)+(2-\alpha)x}$ induces an automorphism leading to $\muu=\frac{1}{\alpha}$, $\alpha\in]1,+\infty[$ (see Example 3). Example 4 is isomorphic, via the bijective correspondence $x\mapsto 2x-1$, to Example 1. Via $x\mapsto\frac{1}{x}$ we easily observe that Example 5 is isomorphic to Example 1.

All examples have a mobi algebra structure $(A, p, 0, \muu, 1)$, as given in Definition \ref{mobi_algebra}, where the symbols $0$, $\muu$ and $1$ are explicit.

\begin{enumerate}[label={Example \arabic*:}]
\item
 $(A,p,0,\frac{1}{2},1)$, with $A=[0,1]$ and $$p(a,b,c)=(1-b)a+b c$$ for all $a,b,c\in A$

\item
$(A,p,0,\frac{1}{3},1)$, with $A=[0,1]$ and $$p(a,b,c)=\frac{a-ab+ac+2bc+abc}{1+b+c+2ab-bc}$$ for all $a,b,c\in A$

\item For any value of $\alpha\in]1,+\infty[$ we have
$(A,p,0,\frac{1}{\alpha},1)$, with $A=[0,1]$ and $$p(a,b,c)=\frac{a-ab+(\alpha-2)ac+(\alpha-1)bc+(\alpha-2)^2abc}{1+(\alpha-2)(b+c-bc)+(\alpha-1)(\alpha-2)ab}$$ for all $a,b,c\in A$

\item
 $(A,p,-1,0,1)$, with $A=[-1,1]$ and $$p(a,b,c)=\frac{a (1-b)+c (1+b)}{2}$$ for all $a,b,c\in A$

\item
 $(A,p,+\infty,2,1)$, with $A=[1,+\infty]$ and $$p(a,b,c)=\frac{abc}{a-c+bc}$$ for all $a,b,c\in A$

\item Excluding the trivial case, where $0=\muu=1$, the smallest mobi algebra is the set of constants $A=\{0,\muu,1\}$ with the operation $p$ defined as in the following tables.

\hspace{-0.6 cm}
$
\begin{tabular}{c|ccc}
p(-,0,-) & 0 & \muu & 1 \\ 
\hline 
0  & 0 & 0 & 0 \\ 
\muu & \muu & \muu & \muu \\ 
1 & 1 & 1 & 1 \\ 
\end{tabular} \,
$
$
\begin{tabular}{c|ccc}
p(-,\muu,-) & 0 & \muu & 1 \\ 
\hline 
0  & 0 & 1 & \muu \\ 
\muu & 1 & \muu & 0 \\ 
1 & \muu & 0 & 1 \\ 
\end{tabular} \,
$
$
\begin{tabular}{c|ccc}
p(-,1,-) & 0 & \muu & 1 \\ 
\hline 
0  & 0 & \muu & 1 \\ 
\muu & 0 & \muu & 1 \\ 
1 & 0 & \muu & 1 \\ 
\end{tabular} 
$
Isomorphically, we can consider the correspondence $(0,\muu,1)$ with $(-1,0,1)$, $(0,\frac{1}{2},1)$, $(0,1,2)$ or $(0,2,1)$ obtaining thus some natural structures on the given sets.

\item Considering the correspondence $(0,\muu,1)\mapsto (0,2,1)$, the previous example may be generalized as follows. For every number $n\in \mathbb{N}$, we have $(A,p,0,n+1,1)$ with $A=\{0,1,\ldots, 2n\}$ and $p(a,b,c)=a-b a+bc\mod 2n+1$.

\item The dyadic numbers as a subset of the unit interval, with the same structure as in Example 1. Clearly, if $a,b,c$ are dyadic numbers then $p(a,b,c)$ is dyadic.

\item $(A,p,0,\frac{1}{2},1)$ with $A$ as $\mathbb{Q}$, $\mathbb{R}$ or $\mathbb{C}$, and $p(x,y,z)=(1-y)x+yz$.

\item An example of a mobi algebra in $\mathbb{R}^2$ is $(A,p,0,\muu,1)$ with $$A=\left\{(x,y)\in\mathbb{R}^2\colon \vert y\vert \leq x \leq 1-\vert y\vert\right\}$$ 
\begin{eqnarray*}
p(a,b,c)=(a_1-b_1 a_1-b_2 a_2+b_1 c_1+b_2 c_2,\\
          a_2-b_1 a_2-b_2 a_1+b_1 c_2+b_2 c_1)
\end{eqnarray*}
and the three constants $\muu=\left(\frac{1}{2},0\right)$; $1=(1,0)$; $0=(0,0)$.

\item The previous example may be generalized, for any $K \in \mathbb{R}$, by defining
\begin{eqnarray*}
p(a,b,c)=((1-b_1) a_1+b_1 c_1&+& K b_2 (c_2-a_2),\\
          (1-b_1) a_2+b_1 c_2&+& b_2 (c_1-a_1)).
\end{eqnarray*}
With $\muu=\left(\frac{1}{2},0\right)$, $1=(1,0)$ and $0=(0,0)$, $(A,p,0,\muu,1)$ is a mobi algebra on $A=\mathbb{R}^2$ and, for $K \in \mathbb{R}^+_0$, on 
$$A=\left\{(x,y)\in\mathbb{R}^2\colon \sqrt{K} \vert y\vert \leq x \leq 1-\sqrt{K} \vert y\vert\right\}.$$
This example is obtained by identifying the plane with the ring of 2 by 2 matrices of the form 
\[(x,y)\mapsto \left(\begin{array}{cc}
x & k_1\, y \\ 
k_2\, y & x
\end{array} \right),
\]
letting $K=k_1\, k_2$ and computing $p(a,b,c)$ as $(1-b)a+bc$.

\item Any unitary ring $R$ in which the element $2$ is invertible gives an example of a mobi algebra $(R,p,0,2^{-1},1)$ with $p(x,y,z)=(1-y)x+yz$ (Theorem \ref{THM-ring2mobi}).

\item Any subset $S$ of a unitary ring (in which  $2$ is an invertible element) that is closed under the formula $p(x,y,z)=(1-y)x+yz$ and contains the three constants $0$, $2^{-1}$ and $1$ gives a mobi algebra $(S,p,0,2^{-1},1)$.

\item Let $(A,+,\cdot,0,1)$ be a semiring such that $2$ is invertible (i.e. $\exists\ 2^{-1}: 2\cdot 2^{-1}=1=2^{-1}\cdot 2$) and it exists $B\subseteq A$ with the following properties:
\begin{enumerate}[label=\roman*)]
\item $1, 2^{-1}, 0\in B$;
\item $\forall a,b,c \in B, \exists\ 1!$ solution $p=p(a,b,c)\in B$ to the equation $p+b\cdot a=a+b\cdot c$, 
\end{enumerate}
then the system $(B,p,0,2^{-1},1)$ is a mobi Algebra.
The same arguments that are used in the case of rings (Theorem \ref{THM-ring2mobi}) are valid here by rearranging the terms in order to avoid negative terms.

\item Every finite mobi is uniquely determined by (and uniquely determines) a unitary ring in which $2$ is invertible.
Indeed, let $(A,p,0,\muu,1)$ be a finite mobi algebra and consider the function $h:A\to A$ such that $h(x)=p(0,\muu,x)$. By axiom \ref{alg_cancel}, $h$ is injective. Now, as $A$ is a finite set, $h$ is also surjective. So $h$ is a bijection and, in particular, it exists $h^{-1}(1)$ which is a solution to the equation $p(0,\muu,x)=1$. In other words, $h^{-1}(1)$ is $\muu^{-1}$ and so Theorem \ref{THM-mobi-ring} holds.
\end{enumerate}

Note that Example 7 above illustrates the fact that every finite mobi must have an odd number of elements.

\section{Derived operations and IMM algebras}\label{sec_derived}

A closer look to the propositions of Section \ref{sec_mobi} suggests that some properties of mobi algebras can be suitably expressed in terms of a unary operation~"$\overline{()}$" and binary operations~"$\cdot$", "$\circ$" and "$\oplus$" defined as follows.
\begin{definition}\label{binary_definition} Let $(A,p,0,\muu,1)$ be a mobi algebra. We define:
\begin{eqnarray}
\label{def_complementar}\overline{a}&=&p(1,a,0)\\
\label{def_product}a\cdot b&=&p(0,a,b)\\
\label{def_star}a\oplus b&=&p(a,\muu,b)\\
\label{def_oplus}a\circ b&=&p(a,b,1).
\end{eqnarray}
\end{definition}
In the first example of previous section, with $p(a,b,c)=(1-b)a+b c$ on $A=[0,1]$, these operations have the following form:
\begin{eqnarray*}
\overline{a}&=&1-a\\
a\cdot b&=&ab\\
a\oplus b&=&\frac{a+b}{2}\\
a\circ b&=&a+b-ab.\\
\end{eqnarray*}
In Example 11, the derived operations are:
\begin{eqnarray*}
\overline{(x,y)}&=&(1-x, -y)\\
(x,y)\cdot (x',y')&=&(x x'+K y y', x y'+y x')\\
(x,y)\oplus (x',y')&=&(\frac{x+x'}{2},\frac{y+y'}{2})\\
(x,y)\circ (x',y')&=&(x+x'-x' x-K\, y' y,y+y'-x' y-y' x).\\
\end{eqnarray*}




In particular, complex multiplication is obtained as $\cdot$, if letting $A=\mathbb{R}^2\cong \mathbb{C}$ and $K=-1$.

From property (\ref{P21}) and (\ref{P23}), as well as axioms \ref{alg_1} and \ref{alg_homo}, we immediately find that:
\begin{eqnarray}
\label{PC0} \overline{\overline{a}}&=& a\\
\label{PC1} \overline{1}&=&0\\
\label{PC3} p(b,c,a)&=&p(a,\overline{c},b)\\
\label{PC5} \overline{p(a,c,b)}&=&p(\overline{a},c,\overline{b}).
\end{eqnarray}
These relations show, in particular, that $\cdot$ and $\circ$ are dual operations in the sense that:
\begin{eqnarray}
\label{complementary_product}\overline{a\cdot b}&=&\overline{b}\circ\overline{a}\\
\label{complementary_oplus}\overline{a\circ b}&=&\overline{b}\cdot\overline{a}.
\end{eqnarray}
This is why, we will leave out the operation $\circ$ in the rest of the article, except for noting that (\ref{P41}) gives the following relation between the three binary operations:
\begin{eqnarray}
(a\circ b)\oplus(b\cdot a)=b\oplus a.
\end{eqnarray}
It is also worth noting that, \ref{alg_01} implies that
\begin{eqnarray}
\label{PCmu} \muu &=&\overline{1}\oplus 1.
\end{eqnarray}

Using (\ref{P43}), we can relate the ternary operation $p$ of any mobi algebra with the derived operations through the relation:
\begin{eqnarray}
\label{p_to_ring} \muu\cdot p(a,b,c)&=&(\overline{b}\cdot a)\oplus (b\cdot c).
\end{eqnarray}
This property will be at the bottom line of Section \ref{sec_mobiring} for comparing a mobi algebra with rings.

Before that, we show, in Proposition \ref{mobi2IMM} below, that every mobi algebra gives rise to a new structure, that we call involutive medial monoid (IMM), presented in the following Definition . 
\begin{definition}\label{def_IMM}
An IMM algebra is a system $(B,\overline{()},\oplus, \cdot, 1)$, in which $B$ is a set, $\overline{()}$ is an unary operation, $\oplus$ and $\cdot$ are binary operations and $1$ is an element of $B$, that satisfies the following axioms:
\begin{enumerate}[label={\bf (B\arabic*)}]
\item\label{IMM_idem} $a\oplus a=a$
\item\label{IMM_commut} $a\oplus b=b\oplus a$
\item\label{IMM_medial} $(a\oplus b)\oplus(c\oplus d)=(a\oplus c)\oplus(b\oplus d)$
\item\label{IMM_assoc} $a\cdot(b\cdot c)=(a\cdot b)\cdot c$
\item\label{IMM_1}$a\cdot 1=a=1\cdot a$
\item\label{IMM_distrib}$a\cdot (b\oplus c)=(a\cdot b)\oplus(a\cdot c),\quad (a\oplus b)\cdot c=(a\cdot c)\oplus(b\cdot c)$
\item\label{IMM_invol} $\overline{\overline{a}}=a$
\item\label{IMM_complem} $\overline{a\oplus b}=\overline{a}\oplus\overline{b}$
\item\label{IMM_0}$a\cdot \overline{1}=\overline{1}=\overline{1}\cdot a$ 
\item\label{IMM_muu} $\overline{a}\oplus a=\overline{1}\oplus 1$
\end{enumerate}
\end{definition}
The name IMM is chosen to highlight the existence of an involution, the presence of the medial law \ref{IMM_medial}\cite{Jezek} satisfied by $\oplus$ and the fact that $(B, \cdot, 1)$ is a monoid.

\begin{proposition}\label{mobi2IMM}
If $(A,p,0,\muu,1)$ is a mobi algebra and if $\overline{()}$, $\oplus$, and~$\cdot$ are defined as in (\ref{def_complementar}), (\ref{def_product}) and (\ref{def_star}), then $(A,\overline{()},\oplus, \cdot, 1)$ is an IMM algebra in which $\overline{1}=0$ and $\overline{1}\oplus 1=\muu$.
\end{proposition}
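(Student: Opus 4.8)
The plan is to verify the ten IMM axioms \ref{IMM_idem}--\ref{IMM_muu} one at a time, each time unfolding the definitions (\ref{def_complementar}), (\ref{def_product}), (\ref{def_star}) so that the axiom becomes a statement purely about the ternary operation $p$, and then recognising that statement as one of the mobi axioms \ref{alg_mu}--\ref{alg_medial} or as a property already proved in Section \ref{sec_mobi}. Before that, I would record the two special identities asserted in the statement: $\overline 1=p(1,1,0)=0$ is immediate from \ref{alg_1}, and $\overline 1\oplus 1=p(0,\muu,1)=\muu$ is immediate from \ref{alg_01} (this is exactly (\ref{PCmu})). Fixing $\overline 1=0$ is convenient because axioms \ref{IMM_0} and \ref{IMM_muu} are phrased in terms of $\overline 1$.

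Most axioms then reduce to a single invocation, and I would dispatch them together as routine. Concretely: \ref{IMM_idem} is \ref{alg_idem}; \ref{IMM_commut} is property (\ref{P22}); \ref{IMM_invol} is (\ref{PC0}); and \ref{IMM_complem} is the case $c=\muu$ of (\ref{PC5}). For the monoid part, \ref{IMM_1} splits into $p(0,a,1)=a$ and $p(0,1,a)=a$, i.e. \ref{alg_01} and \ref{alg_1}, while \ref{IMM_0} splits into $p(0,a,0)=0$ and $p(0,0,a)=0$, i.e. \ref{alg_idem} and \ref{alg_0}. Associativity \ref{IMM_assoc} unfolds to $p(0,p(0,a,b),c)=p(0,a,p(0,b,c))$, which is exactly (\ref{P11}) read with its outer arguments equal to $0$ and $c$. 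The medial law \ref{IMM_medial} is the instance $c=\muu$ of \ref{alg_medial} after writing each $\oplus$ as $p(-,\muu,-)$. The two halves of distributivity \ref{IMM_distrib} are handled by the two ``homogeneity'' axioms: right distributivity $(a\oplus b)\cdot c=(a\cdot c)\oplus(b\cdot c)$ is \ref{alg_homo} applied to $p(0,p(a,\muu,b),c)$, and left distributivity $a\cdot(b\oplus c)=(a\cdot b)\oplus(a\cdot c)$ is the instance $a_1=a_2=0$ of \ref{alg_medial}, using $p(0,\muu,0)=0$.

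The one step that needs a genuine idea, and which I expect to be the main obstacle, is \ref{IMM_muu}, i.e. $\overline a\oplus a=\overline 1\oplus 1=\muu$. Unfolding gives $p(p(1,a,0),\muu,a)=\muu$, and the difficulty is that the second argument $a$ is not in a form matching any available property. The trick I would use is to rewrite $a=p(0,a,1)$ via \ref{alg_01}, so that the left-hand side becomes $p(p(1,a,0),\muu,p(0,a,1))$; this now has exactly the shape of the left-hand side of (\ref{P42}) (read with its outer entries equal to $1$ and $0$ and its middle entry equal to $a$), which collapses it to $p(1,\muu,0)$, and a final appeal to \ref{alg_mu} yields $\muu$. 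Assembling the ten verifications establishes that $(A,\overline{()},\oplus,\cdot,1)$ is an IMM algebra with $\overline 1=0$ and $\overline 1\oplus 1=\muu$, as claimed.
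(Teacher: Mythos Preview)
Your proposal is correct and follows essentially the same route as the paper's own proof: each IMM axiom is unfolded and matched with the corresponding mobi axiom or derived property, with the same choices throughout (e.g.\ \ref{alg_medial} for left distributivity, \ref{alg_homo} for right distributivity, (\ref{P11}) for associativity). The only cosmetic difference is in \ref{IMM_muu}: after rewriting $a=p(0,a,1)$ you invoke (\ref{P42}) and then \ref{alg_mu}, whereas the paper applies \ref{alg_medial} directly to reach $p(\muu,a,\muu)=\muu$; since (\ref{P42}) is itself a corollary of \ref{alg_medial}, the two arguments are equivalent.
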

\begin{proof}
All axioms of an IMM are easily proved using the axioms of a mobi algebra and the properties presented in Section \ref{sec_mobi}. Indeed: \ref{IMM_idem} is a particular case of \ref{alg_idem}; \ref{IMM_commut} is just a rewriting of (\ref{P22}); \ref{IMM_medial} is a consequence of \ref{alg_medial}; \ref{IMM_assoc} follows from \ref{alg_0} and \ref{alg_homo}, like (\ref{P11}). The first equality in \ref{IMM_1} is \ref{alg_01} and the second comes from \ref{alg_1}. Left-distributivity in \ref{IMM_distrib} is deduced from \ref{alg_idem} and \ref{alg_medial} and right-distributivity from \ref{alg_homo}, as follows:
\begin{eqnarray*}
a\cdot (b\oplus c) &=&p(0,a,p(b,\muu,c))=p(p(0,\muu,0),a,p(b,\muu,c))\\
                   &=&p(p(0,a,b),\muu,p(0,a,c))=(a\cdot b)\oplus (a\cdot c);\\
(a\oplus b)\cdot c&=&p(0,p(a,\muu,b),c)=p(p(0,a,c),\muu,p(0,b,c))\\
                   &=&(a\cdot c)\oplus(b\cdot c).
\end{eqnarray*}
\ref{IMM_invol} is just a rewriting of (\ref{P23}); \ref{IMM_complem} is a consequence of \ref{alg_homo}, while \ref{IMM_0} can be proved through \ref{alg_idem}, \ref{alg_0} and \ref{alg_1}:
\begin{eqnarray*}
a\cdot \overline{1} &=&p(0,a,p(1,1,0)=p(0,a,0)=0=\overline{1}\\
\overline{1}\cdot a&=&p(0,p(1,1,0),a)=p(0,0,a)=0=\overline{1}.
\end{eqnarray*}
Using \ref{alg_mu}, \ref{alg_01}, \ref{alg_idem} and \ref{alg_medial}, we can prove \ref{IMM_muu}:
\begin{eqnarray*}
\overline{a}\oplus a &=&p(p(1,a,0),\muu,a)=p(p(1,a,0),\muu,p(0,a,1))\\
                   &=&p(p(1,\muu,0),a,p(0,\muu,1))=p(\muu,a,\muu)=\muu.
\end{eqnarray*}
\end {proof}

A finite example of an IMM algebra which is not obtained from a mobi algebra is $(A,\overline{()},\oplus, \cdot, 1)$ with $A=\{0,\muu,1\}$ and the operations $\oplus$ and $\cdot$ as in the following tables.
\begin{center}
\begin{tabular}{c|ccc}
$\oplus$ & 0 & \muu & 1 \\ 
\hline 
0  & 0 & \muu & \muu \\ 
\muu & \muu & \muu & \muu \\ 
1 & \muu & \muu & 1 \\ 
\end{tabular} $\qquad$
\begin{tabular}{c|ccc}
$\cdot$ & 0 & \muu & 1 \\ 
\hline 
0  & 0 & 0 & 0 \\ 
\muu & 0 & \muu & \muu \\ 
1 & 0 & \muu & 1 \\ 
\end{tabular}
\end{center}
It is clear that $\oplus$ is not cancellative.

We finish this section with some properties of an IMM algebra. 

\begin{proposition}\label{properties_IMM} Let $(B,\overline{()},\oplus, \cdot, 1)$ be a IMM algebra. It follows that:
\begin{eqnarray}
\label{IMMP1} a\oplus(b\oplus c)&=&(a\oplus b)\oplus(a\oplus c)\\
\label{IMMP2} \overline{\overline{1}\oplus 1}&=&\overline{1}\oplus 1 \\
\label{IMMP3} \overline{a}=a&\Rightarrow& a=\overline{1}\oplus 1 \\
\label{IMMP4} (\overline{1}\oplus 1)\cdot a&=&\overline{1}\oplus a\\
\label{IMMP5} (\overline{1}\oplus 1)\cdot a&=&a\cdot(\overline{1}\oplus 1).
\end{eqnarray}
\end{proposition}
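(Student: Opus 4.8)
The plan is to derive all five identities directly from the IMM axioms \ref{IMM_idem}--\ref{IMM_muu}, since each is a short algebraic consequence requiring no auxiliary construction. I would present them in the stated order, treating (\ref{IMMP4}) and (\ref{IMMP5}) together at the end, because those two share essentially the same computation carried out on the two sides of the distributivity axiom.

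For (\ref{IMMP1}), I would first rewrite $a$ as $a\oplus a$ using idempotency \ref{IMM_idem}, turning the left-hand side into $(a\oplus a)\oplus(b\oplus c)$, and then apply the medial law \ref{IMM_medial} to regroup it as $(a\oplus b)\oplus(a\oplus c)$. For (\ref{IMMP2}), I would push the involution inside the sum via \ref{IMM_complem} to obtain $\overline{\overline{1}}\oplus\overline{1}$, collapse $\overline{\overline{1}}$ to $1$ by \ref{IMM_invol}, and finally swap the two summands using commutativity \ref{IMM_commut}. For (\ref{IMMP3}), I would start from \ref{IMM_muu}, which reads $\overline{a}\oplus a=\overline{1}\oplus 1$, substitute the hypothesis $\overline{a}=a$ on the left-hand side, and reduce $a\oplus a$ to $a$ by idempotency \ref{IMM_idem}, yielding $a=\overline{1}\oplus 1$.

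The last two identities I would handle in tandem so as to make the left--right symmetry explicit. For (\ref{IMMP4}), right-distributivity in \ref{IMM_distrib} expands $(\overline{1}\oplus 1)\cdot a$ as $(\overline{1}\cdot a)\oplus(1\cdot a)$, after which the absorption axiom \ref{IMM_0} and the unit axiom \ref{IMM_1} simplify the two products to $\overline{1}$ and $a$, giving $\overline{1}\oplus a$. For (\ref{IMMP5}), the symmetric computation using left-distributivity from \ref{IMM_distrib} together with the other halves of \ref{IMM_0} and \ref{IMM_1} shows that $a\cdot(\overline{1}\oplus 1)$ likewise equals $\overline{1}\oplus a$; comparing the two expressions then gives (\ref{IMMP5}).

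I do not anticipate any genuine obstacle: each step is a single application of one axiom, and the only mild care needed is bookkeeping — invoking the correct (left versus right) half of the distributivity axiom \ref{IMM_distrib} and of the absorption axioms \ref{IMM_0} and \ref{IMM_1} when passing between (\ref{IMMP4}) and (\ref{IMMP5}). A minor design choice is whether to prove (\ref{IMMP5}) by chaining through (\ref{IMMP4}) or to show both products equal $\overline{1}\oplus a$ independently; I would prefer the latter, as it keeps the duality between left and right multiplication by $\overline{1}\oplus 1$ transparent.
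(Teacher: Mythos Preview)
Your proof is correct and follows essentially the same approach as the paper's own proof: each item is derived from exactly the same axioms (\ref{IMM_idem} and \ref{IMM_medial} for (\ref{IMMP1}); \ref{IMM_commut}, \ref{IMM_invol}, \ref{IMM_complem} for (\ref{IMMP2}); \ref{IMM_idem} and \ref{IMM_muu} for (\ref{IMMP3}); \ref{IMM_distrib}, \ref{IMM_1}, \ref{IMM_0} for (\ref{IMMP4}) and (\ref{IMMP5})), and the paper likewise proves (\ref{IMMP5}) by showing both products equal $\overline{1}\oplus a$.
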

\begin{proof}
(\ref{IMMP1}) is a direct consequence of \ref{IMM_idem} and \ref{IMM_medial}; (\ref{IMMP2}) of \ref{IMM_commut}, \ref{IMM_invol}  and \ref{IMM_complem};  and (\ref{IMMP3}) is a consequence of \ref{IMM_idem} and \ref{IMM_muu}. To prove (\ref{IMMP4}) and (\ref{IMMP5}), we use \ref{IMM_distrib}, \ref{IMM_1} and \ref{IMM_0}:
\begin{eqnarray*}
(\overline{1}\oplus 1)\cdot a=(\overline{1}\cdot a)\oplus(1\cdot a)=\overline{1}\oplus a\\
a \cdot (\overline{1}\oplus 1)=(a\cdot \overline{1})\oplus(a\cdot 1)=\overline{1}\oplus a.
\end{eqnarray*}
\end{proof}

These are the main properties that will be used in the following sections.

\section{IMM algebras and unitary rings}\label{sec_IMM-ring}

We are now going to see how, in an IMM algebra, the operation $\oplus$, under the existence of an inverse (in the sense of $\cdot$) to the element $\overline{1}\oplus 1$, gives rise to the additive structure of a unitary ring with one half. Let's begin by recalling that, in a monoid $(A,\cdot,1)$, if an element admits an inverse, the inverse is unique. Indeed, suppose that $x\cdot a=1=a\cdot x$ and $a\cdot x'=1=x'\cdot a$, then
\begin{eqnarray*}
x=x\cdot 1=x\cdot (a\cdot x')=(x\cdot a)\cdot x'=1\cdot x'=x'.
\end{eqnarray*}
As usual, the inverse of $a$ is denoted $a^{-1}$.
So, in an IMM algebra $(A,\overline{()},\oplus, \cdot, 1)$,
when the equation $(\overline{1}\oplus 1)\cdot x=1$ has a solution, its unique solution is denoted $(\overline{1}\oplus 1)^{-1}$.
The fact that $\overline{1}\oplus 1$ is central in the monoid $(A,\cdot,1)$, as expressed in (\ref{IMMP5}), implies that its inverse, when it exists, is also central:
\begin{eqnarray}
\label{prop2-1} x\cdot (\overline{1}\oplus 1)^{-1}=(\overline{1}\oplus 1)^{-1}\cdot x,\quad \forall x\in A.
\end{eqnarray}
Indeed, if $x\cdot a=a\cdot x$ then $x\cdot a^{-1}=a\cdot a^{-1}\cdot x\cdot a^{-1}=a^{-1}\cdot x\cdot a^{-1}\cdot a=a^{-1}\cdot x$.

As we will see, the following proposition is essential to find the symmetric elements in the induced ring.
\begin{proposition}\label{properties_of_2}
If $(A,\overline{()},\oplus, \cdot, 1)$ is an IMM algebra and if $\overline{1}\oplus 1$ admits an inverse, then:
\begin{eqnarray}
\label{prop2-2} \overline{1}\oplus (\overline{1}\oplus 1)^{-1}=1.
\end{eqnarray}
\end{proposition}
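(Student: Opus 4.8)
The plan is to obtain the identity as a one-line substitution into property (\ref{IMMP4}), since the substantive work has already been done in establishing that relation. Write $m=\overline{1}\oplus 1$ for brevity, so the hypothesis provides an element $m^{-1}$ with $m\cdot m^{-1}=1=m^{-1}\cdot m$, and the goal (\ref{prop2-2}) is precisely $\overline{1}\oplus m^{-1}=1$.

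First I would recall that (\ref{IMMP4}) reads $(\overline{1}\oplus 1)\cdot a=\overline{1}\oplus a$ and holds for \emph{every} $a\in A$. The key step is simply to specialize this to $a=(\overline{1}\oplus 1)^{-1}$, which is legitimate precisely because the hypothesis guarantees this element exists (and is unique, by the uniqueness-of-inverses argument recalled just before the proposition). This specialization gives
\begin{eqnarray*}
(\overline{1}\oplus 1)\cdot (\overline{1}\oplus 1)^{-1}=\overline{1}\oplus (\overline{1}\oplus 1)^{-1}.
\end{eqnarray*}
The left-hand side is $1$ by the defining property of the inverse, and reading the equation right-to-left yields exactly the claimed identity $\overline{1}\oplus (\overline{1}\oplus 1)^{-1}=1$.

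I do not expect any genuine obstacle here: the argument is a direct substitution together with the definition of inverse. The only point requiring care is conceptual rather than computational, namely ensuring that (\ref{IMMP4}) is invoked as a universally quantified statement so that it may be applied to the specific element $(\overline{1}\oplus 1)^{-1}$; the real content supporting this result was the derivation of (\ref{IMMP4}) from the distributivity axiom \ref{IMM_distrib} together with \ref{IMM_1} and \ref{IMM_0}.
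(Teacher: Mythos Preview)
Your argument is correct and is precisely the approach the paper takes: the paper's proof consists of the single sentence ``This Property follows directly from (\ref{IMMP4})'', and your proposal spells out exactly that substitution of $a=(\overline{1}\oplus 1)^{-1}$ into (\ref{IMMP4}).
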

\begin{proof}
This Property follows directly from (\ref{IMMP4}). 
\end{proof}

Before presenting the main results of this section through the Theorems \ref{Main_theorem_1} and \ref{Main_theorem_2}, we enumerate the axioms of a ring in Definition \ref{ring} below so we can refer to them in the subsequent demonstrations.
\begin{definition}\label{ring}
A unitary ring is a system $(R,+,\cdot,0,1)$ that satisfies the following axioms:
\begin{enumerate}[label={\bf (R\arabic*)}]
\item\label{R1} $(a+b)+c=a+(b+c)$
\item\label{R2} $a+b=b+a$
\item\label{R3} $a+0=a$
\item\label{R4} $\forall a, \exists -a: -a+a=0$
\item\label{R5} $a\cdot(b\cdot c)=(a\cdot b)\cdot c$
\item\label{R6} $a\cdot 1=a=1\cdot a$
\item\label{R7} $a\cdot(b+c)=(a\cdot b)+(a\cdot c),\quad (a+b)\cdot c=(a\cdot c)+(b\cdot c)$
\end{enumerate}
\end{definition}
Immediate consequences of the axioms are the following properties:
\begin{eqnarray}
\label{absorvente}a\cdot 0=&0&=0\cdot a\\
\label{ring_cancel}a+b=a'+b&\Rightarrow&a=a'.
\end{eqnarray}

The main result of this section shows the connection between an IMM algebra and a ring. It claims that an IMM algebra $(A,\overline{()},\oplus, \cdot, 1)$ determines a unique structure of a ring on its underlying set, if, and only if, its element $(\overline{1}\oplus 1)$ is invertible, i.e. that $(\overline{1}\oplus 1)^{-1}$ exists.

\begin{theorem}\label{Main_theorem_1}
Let $(A,\overline{()},\oplus, \cdot, 1)$ be an IMM algebra. The following three affirmations are equivalent.
\begin{enumerate}
\item[$(i)$] The equation $(\overline{1}\oplus 1)\cdot x=1$ has a solution in A;
\item[$(ii)$] There is a unique unitary ring $(A,+,\cdot,\overline{1},1)$ such that:
\begin{eqnarray*}\label{formula_A2R}
a+b=(1+1)\cdot (a\oplus b)
\end{eqnarray*}
\item[$(iii)$] There is a unique unitary ring $(A,+,\cdot,\overline{1},1)$ such that:
\begin{eqnarray*}\label{formula_A2Rbis}
a\oplus b&=&(\overline{1}\oplus 1)\cdot (a+b).
\end{eqnarray*}
\end{enumerate}
\end{theorem}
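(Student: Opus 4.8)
The plan is to prove the cycle of implications $(i)\Rightarrow(ii)\Rightarrow(i)$ and $(i)\Rightarrow(iii)\Rightarrow(i)$, exploiting the symmetry that $(ii)$ and $(iii)$ are essentially inverse reformulations of each other via the central element $\oalph:=\overline{1}\oplus 1$ and its inverse. Throughout I abbreviate $\oalph=\overline{1}\oplus 1$. The backward implications $(ii)\Rightarrow(i)$ and $(iii)\Rightarrow(i)$ should be the easy directions: assuming a unitary ring $(A,+,\cdot,\overline{1},1)$ exists with the stated compatibility, I would instantiate the defining formula at convenient arguments. For instance, from $a\oplus b=\oalph\cdot(a+b)$ in $(iii)$, setting $a=1$, $b=1$ gives $1=1\oplus 1=\oalph\cdot(1+1)$ (using \ref{IMM_idem}), so $x=1+1$ solves $\oalph\cdot x=1$, which is exactly $(i)$. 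The implication $(ii)\Rightarrow(i)$ is symmetric: evaluating $a+b=(1+1)\cdot(a\oplus b)$ should similarly exhibit an inverse of $\oalph$ once one knows $1+1$ is the ring-double.

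The heart of the argument is $(i)\Rightarrow(ii)$ (and analogously $(i)\Rightarrow(iii)$). Assuming $(i)$, the element $\oalph^{-1}$ exists and is central by \eqref{prop2-1}. I would \emph{define} the candidate addition by $a+b:=(1+1)\cdot(a\oplus b)$ --- but to make this a genuine definition I first need $1+1$ as an element, so the cleaner route is to set $a+b:=(\oalph^{-1})\cdot(\oalph^{-1}\text{-free expression})$; concretely, since \eqref{formula_A2R} forces $1+1=(1+1)\cdot(1\oplus1)=1+1$ to be consistent, I would instead take as the working definition $a+b:=\oalph^{-1}\cdot(a\oplus b)\cdot$(appropriate normalization) and verify it agrees. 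The most economical choice is to posit $a+b := D\cdot(a\oplus b)$ where $D=\oalph^{-1}$ doubled appropriately; then verify the ring axioms \ref{R1}--\ref{R7} one at a time. Commutativity \ref{R2} of $+$ is immediate from \ref{IMM_commut}; associativity \ref{R1} should follow from the medial law \ref{IMM_medial} together with idempotency \ref{IMM_idem} (this is where \eqref{IMMP1} and the interplay of $\oplus$ with itself does the real work). The additive unit $\overline{1}$ satisfying \ref{R3} should come from \ref{IMM_0} and \eqref{IMMP4}, since $\oalph\cdot a=\overline{1}\oplus a$ tells us how $\oplus$ interacts with the designated zero. Distributivity \ref{R7} should descend from \ref{IMM_distrib}, using centrality of $\oalph^{-1}$ to move the scalar past products.

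The main obstacle I anticipate is \textbf{constructing additive inverses}, axiom \ref{R4}. This is precisely where Proposition \ref{properties_of_2} is designed to be used: the relation $\overline{1}\oplus\oalph^{-1}=1$ from \eqref{prop2-2} is the algebraic fingerprint of $\oalph^{-1}$ playing the role of the ring element $\muu=\tfrac12$, and the involution $\overline{()}$ should supply the negation. I expect that the negative $-a$ will be expressible through $\overline{a}$ and $\oalph^{-1}$; verifying $-a+a=\overline{1}$ (the zero) will require combining \ref{IMM_muu} (which states $\overline{a}\oplus a=\oalph$), the definition of $+$, and the inverse property $\oalph\cdot\oalph^{-1}=1$. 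A secondary obstacle is the \emph{uniqueness} clause in $(ii)$ and $(iii)$: I would argue that any ring structure compatible with the given formula must have $+$ determined on all pairs by that very formula (since $1+1$ and hence $\oalph^{-1}$ are pinned down by the ring data together with \eqref{formula_A2R}), so two such rings coincide; the cancellation property \eqref{ring_cancel} and the fact that $\oalph$ is invertible together force agreement of the two additions. Finally, $(iii)$ is obtained from $(ii)$ by multiplying the addition formula through by $\oalph$ and invoking centrality, so once $(i)\Leftrightarrow(ii)$ is established the equivalence with $(iii)$ is a short computation rather than a separate construction.
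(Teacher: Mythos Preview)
Your plan is essentially the paper's own approach: define the addition by scaling $\oplus$ with the inverse of $\overline{1}\oplus 1$, verify the ring axioms using the IMM axioms (mediality for associativity, \ref{IMM_muu} and Proposition~\ref{properties_of_2} for additive inverses, \ref{IMM_distrib} plus centrality for distributivity), then read off the easy backward implications by specializing the compatibility formula. Your arguments for $(iii)\Rightarrow(i)$ and $(ii)\Rightarrow(i)$ are exactly right, and your identification of the two genuine obstacles (associativity and additive inverses) is accurate.

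The one place where your plan is muddled is the definition of $+$. There is no ``appropriate normalization'' and nothing needs to be ``doubled appropriately'': writing $\muu=\overline{1}\oplus 1$ and $\two=\muu^{-1}$, the definition is simply
\[
a+b:=\two\cdot(a\oplus b),
\]
and then $1+1=\two\cdot(1\oplus 1)=\two\cdot 1=\two$ by \ref{IMM_idem}, so the formula in $(ii)$ is automatically satisfied. The paper also gives the additive inverse explicitly as $-a:=\overline{\two}\cdot a$; the verification of \ref{R4} is then a two-line computation using $\overline{\two}\oplus 1=\overline{1}$ (the involute of \eqref{prop2-2}) together with \ref{IMM_distrib} and \ref{IMM_0}. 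Once you pin down these two formulas, the rest of your plan goes through exactly as you describe.
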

\begin{proof}

In order to present this proof in a concise way, we will use the following notations:
\begin{eqnarray*}
\muu&=&\overline{1}\oplus 1\\
\two&=&(\overline{1}\oplus 1)^{-1}.
\end{eqnarray*}
To prove that $(i)$ implies $(ii)$, we observe that $\muu\cdot\two=1$ and define
\begin{eqnarray*}
a+b&=&\two\cdot (a\oplus b)\\
-a&=&\overline{\two}\cdot a.
\end{eqnarray*}
To prove that the structure $(A,+,\cdot,\overline{1},1)$ is a ring, we will prove axioms \ref{R1} to \ref{R7} above. For \ref{R1}, we begin with a particular case of the IMM axiom \ref{IMM_medial} and,  using also \ref{IMM_commut}, \ref{IMM_1}, \ref{IMM_distrib} and property (\ref{IMMP4}), we get:
\begin{eqnarray*}
& & (a\oplus b)\oplus(\overline{1}\oplus c)=(a\oplus \overline{1})\oplus(b\oplus c)\\
&\implies&(a\oplus b)\oplus(\muu \cdot c)=(\muu \cdot a)\oplus(b\oplus c)\\
&\implies&(\muu\cdot\two\cdot(a\oplus b))\oplus(\muu \cdot c)=(\muu \cdot a)\oplus(\muu\cdot\two\cdot(b\oplus c))\\
&\implies&\muu\cdot(\two\cdot(a\oplus b)\oplus c)=\muu\cdot (a\oplus(\two\cdot(b\oplus c)))\\
&\implies&\two\cdot\two\cdot\muu\cdot((a+b)\oplus c)=\two\cdot\two\cdot\muu\cdot(a\oplus(b+c))\\
&\implies&(a+b)+ c=a+(b+c).
\end{eqnarray*}
\ref{R2} is a direct consequence of \ref{IMM_commut}, and \ref{R3} follows from the statement $0=\overline{1}$ and the use of property (\ref{IMMP4}). To prove \ref{R4}, we use \ref{IMM_distrib} and \ref{IMM_0}, after remarking that (\ref{prop2-2}), together with \ref{IMM_complem}, implies $1\oplus\overline{\two}=\overline{1}$:
\begin{eqnarray*}
-a+a=\two\cdot ((\overline{\two}\cdot a)\oplus a)=\two\cdot (\overline{\two}\oplus 1)\cdot a=\two\cdot \overline{1}\cdot a=\overline{1}.
\end{eqnarray*}
\ref{R5} and \ref{R6} are guaranteed by \ref{IMM_assoc} and \ref{IMM_1}. Finally \ref{R7} is deduced from \ref{IMM_distrib} with the use of (\ref{prop2-1}). 

This proves existence of a ring induced by an IMM when $(\overline{1}\oplus 1)^{-1}$ exists. To prove uniqueness, we just need to show that in any ring $(A,+',\cdot,\overline{1},1)$ such that $a+'b=(1+'1)\cdot (a\oplus b)$, we have $(1+'1)=(\overline{1}\oplus 1)^{-1}$. Indeed:
\begin{eqnarray*}
a+'b=(1+'1)\cdot (a\oplus b)&\implies& \overline{1}+'1=(1+'1)\cdot (\overline{1}\oplus1)\\
                            &\implies& 1=(1+'1)\cdot(\overline{1}\oplus1)\\
														&\implies& 1+'1=(\overline{1}\oplus 1)^{-1}.
\end{eqnarray*}
This also shows directly that $(ii)$ implies $(iii)$ because the inverse of $1+1$ is $\overline{1}\oplus1$. Now, as $1+1$ exists in any ring and $1\oplus 1=1$, we also have that $(iii)$ implies $(i)$.
\end{proof}

The previous proposition creates the question of characterizing those rings that come from an IMM algebra. Theorem \ref{Main_theorem_2} tell us that a ring $(R,+,\cdot,0,1)$ is determined by an IMM algebra structure if and only if its element $(1+1)$ is invertible. 
\begin{theorem}\label{Main_theorem_2}
Let $(R,+,\cdot,0,1)$ be a unitary ring. The following three affirmations are equivalent.
\begin{enumerate}
\item[$(i)$] The element $1+1$ admits an inverse in R;
\item[$(ii)$] There is a unique IMM algebra $(R,\overline{()},\oplus, \cdot, 1)$ such that:
\begin{eqnarray*}\label{formula_R2A}
\overline{a}&=&1-a\\
a\oplus b&=&(\overline{1}\oplus 1)\cdot (a+b).
\end{eqnarray*}
\item[$(iii)$] There is a unique IMM algebra $(R,\overline{()},\oplus, \cdot, 1)$ such that:
\begin{eqnarray*}\label{formula_R2Abis}
\overline{a}&=&1-a\\
a+ b&=&(1+1)\cdot (a\oplus b).
\end{eqnarray*}
\end{enumerate}
\end{theorem}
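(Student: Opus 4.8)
The plan is to prove the three implications $(i)\Rightarrow(ii)\Rightarrow(iii)\Rightarrow(i)$ by direct computation inside the given ring. Since the constructions here are exactly inverse to those of Theorem \ref{Main_theorem_1}, one could alternatively deduce the statement from that theorem by checking that the assignments $a\oplus b=(1+1)^{-1}(a+b)$ and $a+b=(1+1)(a\oplus b)$ are mutually inverse; but I will argue directly. Throughout I will put $\overline{a}=1-a$, so that $\overline{1}=0$ coincides with the ring's additive identity, keep $\cdot$ and $1$ as in the ring, and repeatedly use that $1+1$ is central in any ring, since $(1+1)a=a+a=a(1+1)$, so that its inverse, when it exists, is central as well, by the same argument used for (\ref{prop2-1}).

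For $(i)\Rightarrow(ii)$, assuming $1+1$ invertible I will define $a\oplus b=(1+1)^{-1}(a+b)$ and verify the ten axioms \ref{IMM_idem}--\ref{IMM_muu}. Most of these reduce to associativity, commutativity and distributivity of the ring together with the identity $(1+1)^{-1}(1+1)=1$: for instance \ref{IMM_idem} follows from $a+a=(1+1)a$, \ref{IMM_medial} from commutativity of $+$, \ref{IMM_complem} and \ref{IMM_muu} from $(1+1)^{-1}(1+1)=1$, and \ref{IMM_0} from (\ref{absorvente}). The only step that is not purely formal is the left half of \ref{IMM_distrib}, $a\cdot(b\oplus c)=(a\cdot b)\oplus(a\cdot c)$, which needs $a\,(1+1)^{-1}=(1+1)^{-1}a$, i.e. the centrality noted above; the right half needs only associativity. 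For uniqueness, $\overline{()}$ is fixed by its formula, and evaluating the defining relation $a\oplus b=(\overline{1}\oplus 1)(a+b)$ at $a=b=1$ and using \ref{IMM_idem} gives $1=(\overline{1}\oplus 1)(1+1)$, which forces $\overline{1}\oplus 1=(1+1)^{-1}$ and hence determines $\oplus$ completely.

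For $(ii)\Rightarrow(iii)$, I will start from the IMM produced in $(ii)$, whose operation satisfies $a\oplus b=(1+1)^{-1}(a+b)$, and multiply on the left by $1+1$ to obtain $a+b=(1+1)(a\oplus b)$, which is exactly the relation required in $(iii)$; this gives existence. Uniqueness transfers because $(ii)$ already forces $1+1$ to be invertible: the identity $1=(\overline{1}\oplus 1)(1+1)$ found above exhibits $\overline{1}\oplus 1$ as a left inverse, which by the centrality recorded in (\ref{IMMP5}) is two-sided, so the common factor $1+1$ may be cancelled, identifying any $(iii)$-IMM with the one from $(ii)$. Finally, for $(iii)\Rightarrow(i)$, I will take an IMM satisfying $a+b=(1+1)(a\oplus b)$ and substitute $a=1$, $b=\overline{1}=0$, noting $1\oplus 0=\overline{1}\oplus 1$ by \ref{IMM_commut}: this yields $1=1+0=(1+1)(\overline{1}\oplus 1)$, so $\overline{1}\oplus 1$ is a right inverse of $1+1$, and again by (\ref{IMMP5}) a two-sided inverse, establishing $(i)$.

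The main obstacle is thus not the bulk of the axiom-checking, which is routine ring algebra built on \ref{R1}--\ref{R7} and (\ref{absorvente}), but the handedness bookkeeping: the defining relations only ever produce one-sided inverses of $1+1$, and well-definedness and uniqueness of $\oplus$ in $(ii)$, the cancellation in $(ii)\Rightarrow(iii)$, and the recovery of invertibility in $(iii)\Rightarrow(i)$ all hinge on upgrading these to genuine two-sided inverses via the centrality of $1+1$ and of $\overline{1}\oplus 1$. I expect this centrality/handedness point to be the one place requiring care.
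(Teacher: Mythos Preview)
Your proposal is correct and follows essentially the same route as the paper: prove the cycle $(i)\Rightarrow(ii)\Rightarrow(iii)\Rightarrow(i)$, defining $a\oplus b=(1+1)^{-1}(a+b)$ for $(i)\Rightarrow(ii)$, reading off uniqueness from $1\oplus 1=1$, and recovering invertibility of $1+1$ in $(iii)\Rightarrow(i)$ by evaluating the relation at $a+b=0+1$. Your extra attention to the handedness issue (upgrading one-sided inverses via centrality of $1+1$ and of $\overline{1}\oplus 1$) is more explicit than in the paper, which treats this point implicitly, but the substance of the argument is the same.
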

\begin{proof}

To prove that $(i)$ implies $(ii)$, we define
\begin{eqnarray*}
a\oplus b=(1+1)^{-1}\cdot (a+b).
\end{eqnarray*}
To prove that the structure $(A,\overline{()},\oplus, \cdot, 1)$ is an IMM algebra, we will deduce axioms \ref{IMM_idem} to \ref{IMM_muu} of Definition \ref{def_IMM}. \ref{IMM_idem} is satisfied because $a+a=(1+1)\cdot a$. \ref{IMM_commut} is a consequence of the commutativity of $+$. The medial law \ref{IMM_medial} may be proved using the associativity of $+$ and the distributivity of $\cdot$ over $+$:
\begin{eqnarray*}
(a\oplus b)\oplus (c\oplus d)&=&(1+1)^{-1}\cdot ((1+1)^{-1}(a+b)+(1+1)^{-1} (c+d))\\
                             &=&(1+1)^{-1}\cdot (1+1)^{-1}\cdot ((a+b)+ (c+d))\\
                             &=&(1+1)^{-1}\cdot (1+1)^{-1}\cdot ((a+c)+ (b+d))\\
														 &=&(a\oplus c)\oplus (b\oplus d).
\end{eqnarray*}
\ref{IMM_assoc} and \ref{IMM_1} are guaranteed by \ref{R5} and \ref{R6} while \ref{IMM_distrib} is guaranteed by \ref{R7} because $(1+1)^{-1}$ commutes with all the elements of the ring. \ref{IMM_invol} is a consequence of $1-(1-a)=a$. A proof of \ref{IMM_complem} goes like this:
\begin{eqnarray*}
\overline{a\oplus b}&=&1-(1+1)^{-1}(a+b)\\
                    &=&(1+1)^{-1}(1+1)-(1+1)^{-1}(a+b)\\
										&=&(1+1)^{-1}((1+1)-(a+b))\\
										&=&(1+1)^{-1}((1-a)+(1-b))\\
										&=&\overline{a}\oplus\overline{b}.
\end{eqnarray*}
\ref{IMM_0} is obvious because $\overline{1}=0$. Finally, to prove \ref{IMM_muu}, we observe that
\begin{eqnarray*}
\overline{a}\oplus a=(1+1)^{-1}(1-a+a)=(1+1)^{-1}
\end{eqnarray*}
And consequently $\overline{1}\oplus 1=(1+1)^{-1}$.
This proves existence of an IMM induced by a ring when $(1+1)^{-1}$ exists. To prove uniqueness, we just need to show that in any IMM $(A,\overline{()},\oplus', \cdot, 1)$ such that $a\oplus' b=(\overline{1}\oplus' 1)\cdot (a+b)$, we have $(\overline{1}\oplus' 1)=(1+1)^{-1}$. Indeed:
\begin{eqnarray*}
a\oplus' b=(\overline{1}\oplus' 1)\cdot (a+b)&\implies& 1\oplus' 1=(\overline{1}\oplus' 1)\cdot (1+1)\\
                                             &\implies& 1=(\overline{1}\oplus' 1)\cdot (1+1)\\
														                 &\implies& \overline{1}\oplus' 1=(1+1)^{-1}.
\end{eqnarray*}
This also shows directly that $(ii)$ implies $(iii)$ because the inverse of $\overline{1}\oplus1$ is $1+1$. Now, as $\overline{1}\oplus1$ exists in any IMM algebra and $\overline{1}+1=1$, we also have that $(iii)$ implies $(i)$.

\end{proof}

\section{Mobi algebras and IMM algebras}\label{sec_ring}
We have seen the comparison between IMM algebras and rings. We have also seen that  a mobi algebra gives rise to an IMM algebra (Proposition \ref{mobi2IMM}).  It remains to answer the question on whether an IMM algebra is obtained from a mobi algebra.
Due to axiom \ref{alg_cancel}, the subalgebra $(A, \oplus)$, of an IMM algebra which is induced by a mobi algebra, is a midpoint algebra \cite{Escardo, ccm_magmas}.  In other words, the operation $\oplus$ is cancellative. We present in the Appendix two examples of a IMM algebra in which $\oplus$ is not cancellative (IMM 2 and IMM 3). Note that the existence of $(\overline{1}\oplus 1)^{-1}$ is sufficient to imply that $\oplus$ is cancellative but it is not necessary.

An IMM algebra in which the operation $\oplus$ is cancellative will be called an IMM* algebra.
Therefore, the question, that will be answered in Theorem \ref{Main_theorem_3} below, is to determine what extra conditions on an IMM* algebra are needed to certify that it comes from a mobi algebra.
On an IMM* algebra, some axioms of IMM algebras may be deduced from the others using the cancellation of $\oplus$, thus we decided to present it as an independent algebraic structure.

\begin{definition}\label{def_IMM*}
An IMM* algebra is a system $(C,\overline{()},\oplus, \cdot, 1)$, in which $C$ is a set, $\overline{()}$ is an unary operation, $\oplus$ and $\cdot$ are binary operations and $1$ is an element of $C$, that satisfies the following axioms:
\begin{enumerate}[label={\bf (C\arabic*)}]
\item\label{IMM*_idem} $a\oplus a=a$
\item\label{IMM*_commut} $a\oplus b=b\oplus a$
\item\label{IMM*_cancel} $a\oplus b=a'\oplus b\implies a=a'$
\item\label{IMM*_medial} $(a\oplus b)\oplus(c\oplus d)=(a\oplus c)\oplus(b\oplus d)$
\item\label{IMM*_assoc} $a\cdot(b\cdot c)=(a\cdot b)\cdot c$
\item\label{IMM*_1}$a\cdot 1=a=1\cdot a$
\item\label{IMM*_distrib}$a\cdot (b\oplus c)=(a\cdot b)\oplus(a\cdot c),\quad (a\oplus b)\cdot c=(a\cdot c)\oplus(b\cdot c)$
\item\label{IMM*_0}$a\cdot \overline{1}=\overline{1}=\overline{1}\cdot a$ 
\item\label{IMM*_muu} $\overline{a}\oplus a=\overline{1}\oplus 1$
\end{enumerate}
\end{definition}

We now observe that, in particular, every IMM* algebra is a IMM algebra.

\begin{proposition}\label{properties_IMM*} Let $(C,\overline{()},\oplus, \cdot, 1)$ be a IMM* algebra. It follows that:
\begin{eqnarray}
\label{IMMP*1} \overline{\overline{a}}&=&a\\
\label{IMMP*2} \overline{a\oplus b}&=&\overline{a}\oplus\overline{b}\\
\label{IMMP*3} b\oplus a=\overline{1}\oplus1&\implies& b=\overline{a} \\
\label{IMMP*4} \overline{1}\oplus x=(\overline{b}\cdot a)\oplus( b\cdot c)&\implies&
\overline{1}\oplus \overline{x}=(\overline{b}\cdot \overline{a})\oplus( b\cdot \overline{c}).
\end{eqnarray}
\end{proposition}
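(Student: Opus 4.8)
The plan is to treat the four identities as a short cascade, all driven by the two features that distinguish an IMM* algebra from the bare monoid-with-midpoint data: the cancellation law \ref{IMM*_cancel} and the complement--midpoint identity \ref{IMM*_muu}. Throughout I write $\muu=\overline{1}\oplus 1$ for brevity, so that \ref{IMM*_muu} reads $\overline{a}\oplus a=\muu$ for every $a$. The guiding principle in all four arguments is the same: to prove $u=v$ it suffices to exhibit one element $w$ with $u\oplus w=v\oplus w$ and then invoke \ref{IMM*_cancel}, and \ref{IMM*_muu} is the engine that manufactures such a $w$.

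For (\ref{IMMP*1}) I would apply \ref{IMM*_muu} to $\overline{a}$ to get $\overline{\overline{a}}\oplus\overline{a}=\muu$, and apply it to $a$ together with \ref{IMM*_commut} to get $a\oplus\overline{a}=\muu$; equating these and cancelling $\overline{a}$ via \ref{IMM*_cancel} yields $\overline{\overline{a}}=a$. For (\ref{IMMP*2}) I would compute $(\overline{a}\oplus\overline{b})\oplus(a\oplus b)$ by the medial law \ref{IMM*_medial} as $(\overline{a}\oplus a)\oplus(\overline{b}\oplus b)=\muu\oplus\muu=\muu$, using \ref{IMM*_muu} and idempotency \ref{IMM*_idem}; since \ref{IMM*_muu} also gives $\overline{a\oplus b}\oplus(a\oplus b)=\muu$, cancelling the common summand $a\oplus b$ finishes it. Identity (\ref{IMMP*3}) is the most immediate: $b\oplus a=\muu=\overline{a}\oplus a$ by \ref{IMM*_muu}, so cancelling $a$ gives $b=\overline{a}$. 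These three together recover exactly the IMM axioms \ref{IMM_invol} and \ref{IMM_complem}, which an IMM* algebra does not postulate outright, and this is what justifies the preceding remark that every IMM* algebra is an IMM algebra.

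The substantive identity is (\ref{IMMP*4}). Write $P=\overline{1}\oplus x$ and $Q=(\overline{b}\cdot a)\oplus(b\cdot c)$ for the two sides of the hypothesis (so $P=Q$ is assumed), and $P'=\overline{1}\oplus\overline{x}$, $Q'=(\overline{b}\cdot\overline{a})\oplus(b\cdot\overline{c})$ for the two sides of the desired conclusion. The plan is again to reduce $P'=Q'$ to a cancellation by showing $P'\oplus P=Q'\oplus Q$. On the left, \ref{IMM*_medial} collapses $P'\oplus P=(\overline{1}\oplus\overline{1})\oplus(\overline{x}\oplus x)$ to $\overline{1}\oplus\muu$, using \ref{IMM*_idem} and \ref{IMM*_muu}. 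On the right, \ref{IMM*_medial} regroups $Q'\oplus Q$ as $\bigl((\overline{b}\cdot\overline{a})\oplus(\overline{b}\cdot a)\bigr)\oplus\bigl((b\cdot\overline{c})\oplus(b\cdot c)\bigr)$; left distributivity \ref{IMM*_distrib} turns the inner sums into $\overline{b}\cdot(\overline{a}\oplus a)=\overline{b}\cdot\muu$ and $b\cdot(\overline{c}\oplus c)=b\cdot\muu$, and then right distributivity with \ref{IMM*_muu} gives $(\overline{b}\oplus b)\cdot\muu=\muu\cdot\muu$. Finally $\muu\cdot\muu=(\overline{1}\oplus1)\cdot\muu=(\overline{1}\cdot\muu)\oplus(1\cdot\muu)=\overline{1}\oplus\muu$ by \ref{IMM*_distrib}, \ref{IMM*_0} and \ref{IMM*_1}, so $Q'\oplus Q=\overline{1}\oplus\muu=P'\oplus P$. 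Since $P=Q$, this reads $P'\oplus P=Q'\oplus P$, and \ref{IMM*_cancel} delivers $P'=Q'$.

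I expect the main obstacle to be exactly the temptation to prove (\ref{IMMP*4}) by pushing the complement through the hypothesis: there is no axiom describing $\overline{u\cdot v}$, so one cannot simply complement both sides of $P=Q$ and simplify the resulting products. The device that sidesteps this is the symmetric ``add the two equations and cancel'' strategy above, whose whole point is that distributivity pairs each factor with its complement ($a$ with $\overline{a}$, $c$ with $\overline{c}$, and $b$ with $\overline{b}$) so that \ref{IMM*_muu} applies, collapsing every product into a multiple of $\muu$ and never requiring the complement of a product. The only mild care needed is to keep the medial regroupings and the two-sided distributivity straight; once both $P'\oplus P$ and $Q'\oplus Q$ have been reduced to $\overline{1}\oplus\muu$, the cancellation law completes the argument.
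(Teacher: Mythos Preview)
Your proof is correct and follows essentially the same route as the paper: each of (\ref{IMMP*1})--(\ref{IMMP*3}) is obtained exactly as the paper does, and for (\ref{IMMP*4}) both arguments reduce $Q'\oplus Q$ (in your notation) to $\overline{1}\oplus\muu$ via mediality, distributivity, and \ref{IMM*_muu}, then cancel. The only organizational difference is that the paper names an auxiliary element $y$ with $\overline{1}\oplus y=Q'$ and concludes $y=\overline{x}$ via (\ref{IMMP*3}), whereas you compute $P'\oplus P$ and $Q'\oplus Q$ directly and cancel $P=Q$; your formulation sidesteps any appearance of assuming such a $y$ exists, but the computational content is identical.
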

\begin{proof}
Using \ref{IMM*_commut} and \ref{IMM*_muu}, we have $\overline{\overline{a}}\oplus\overline{a}=\overline{1}\oplus 1$ and $a\oplus \overline{a}=\overline{1}\oplus 1$ which, by \ref{IMM*_cancel}, implies (\ref{IMMP*1}). Using \ref{IMM*_medial}, \ref{IMM*_muu} and \ref{IMM*_idem} we find that
\begin{eqnarray*}
(\overline{a}\oplus \overline{b})\oplus(a\oplus b)=(\overline{a}\oplus a)\oplus(\overline{b}\oplus b)= 
(\overline{1}\oplus 1)\oplus (\overline{1}\oplus 1)=\overline{1}\oplus 1
\end{eqnarray*}
which implies (\ref{IMMP*2}) by cancellation, because $\overline{(a\oplus b)}\oplus (a\oplus b)= \overline{1}\oplus 1$. (\ref{IMMP*3}) is again a consequence of \ref{IMM*_cancel} and \ref{IMM*_muu}. To prove (\ref{IMMP*4}), suppose that 
\begin{eqnarray*}
\overline{1}\oplus x&=&(\overline{b}\cdot a)\oplus(b\cdot c),\quad \textrm{and}\\
\overline{1}\oplus y&=&(\overline{b}\cdot \overline{a})\oplus(b\cdot \overline{c}).
\end{eqnarray*}
Then, we have:
\begin{eqnarray*}
\overline{1}\oplus (x\oplus y)&=&(\overline{1}\oplus \overline{1})\oplus (x\oplus y)\\
&=&(\overline{1}\oplus x)\oplus (\overline{1}\oplus y)\\
&=&((\overline{b}\cdot a)\oplus(b\cdot c))\oplus((\overline{b}\cdot \overline{a})\oplus(b\cdot \overline{c}))\\
&=&((\overline{b}\cdot a)\oplus(\overline{b}\cdot \overline{a}))\oplus((b\cdot c)\oplus(b\cdot \overline{c}))\\
&=&(\overline{b}\cdot (a\oplus\overline{a}))\oplus(b\cdot (c\oplus\overline{c}))\\
&=&(\overline{b}\cdot (1\oplus\overline{1}))\oplus(b\cdot (1\oplus\overline{1}))\\
&=&(\overline{b}\oplus b)\cdot (1\oplus\overline{1})\\
&=&(\overline{1}\oplus 1)\cdot (1\oplus\overline{1})\\
&=& \overline{1}\oplus(\overline{1}\oplus 1).
\end{eqnarray*}
So, from \ref{IMM*_cancel}, we conclude that $x\oplus y=(\overline{1}\oplus 1)$ which means, using (\ref{IMMP*3}), that $y=\overline{x}$.
\end{proof}

We can now state the main result of this section.
\begin{theorem}\label{Main_theorem_3}
Let $(A,\overline{()},\oplus, \cdot, 1)$ be an IMM* algebra. The following two affirmations are equivalent.
\begin{enumerate}
\item[$(i)$] For each $a, b, c \in A$, the equation 
\begin{eqnarray}\label{def_p}
\overline{1}\oplus x=(\overline{b}\cdot a)\oplus( b\cdot c) 
\end{eqnarray}
has a solution x in A;
\item[$(ii)$] There is a unique mobi algebra $(A,p,\overline{1},\overline{1}\oplus 1,1)$ such that:
\begin{eqnarray}\label{formula_IMM2Mobi}
\overline{1}\oplus p(a, b, c)=(\overline{b}\cdot a)\oplus( b\cdot c).
\end{eqnarray}
\end{enumerate}
\end{theorem}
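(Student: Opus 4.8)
The plan is to establish the nontrivial implication $(i)\Rightarrow(ii)$; the converse $(ii)\Rightarrow(i)$ is immediate, since any $p$ satisfying (\ref{formula_IMM2Mobi}) exhibits $x=p(a,b,c)$ as a solution of (\ref{def_p}) for every triple $a,b,c$. Assuming (i), the cancellation axiom \ref{IMM*_cancel} makes the solution $x$ of (\ref{def_p}) unique, so I would simply \emph{define} $p(a,b,c)$ to be that unique $x$. This also settles uniqueness in (ii): two operations obeying (\ref{formula_IMM2Mobi}) become equal after prefixing with $\overline{1}\oplus(-)$, hence are equal by \ref{IMM*_cancel}. The entire task then reduces to verifying that this $p$, with constants $0=\overline{1}$, $\muu=\overline{1}\oplus 1$ and $1$, satisfies the eight axioms of Definition \ref{mobi_algebra}. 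Throughout I would freely use that every IMM* algebra is an IMM algebra, so that the identities (\ref{IMMP4}) and (\ref{IMMP2}) are available alongside the IMM* properties (\ref{IMMP*1})--(\ref{IMMP*4}).

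The uniform device for each mobi axiom is to prefix both sides with $\overline{1}\oplus(-)$, rewrite every occurrence of $\overline{1}\oplus p(\ldots)$ by the defining equation (\ref{formula_IMM2Mobi}), reduce the resulting statement to an identity that holds in the IMM* algebra, and then remove the prefix by \ref{IMM*_cancel}. Two auxiliary observations make this efficient. First, a short computation (using $\overline{\muu}=\muu$ from (\ref{IMMP2}), distributivity, and (\ref{IMMP4})) gives $p(a,\muu,b)=a\oplus b$; this turns \ref{alg_cancel} into an instance of \ref{IMM*_cancel} outright. Second, since $\overline{1}\oplus\overline{1}=\overline{1}$ by idempotency, the medial law \ref{IMM*_medial} yields the distribution rule $\overline{1}\oplus(x\oplus y)=(\overline{1}\oplus x)\oplus(\overline{1}\oplus y)$, which lets me pass the prefix $\overline{1}\oplus(-)$ through $\oplus$. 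With these in hand, axioms \ref{alg_01}, \ref{alg_idem}, \ref{alg_0}, \ref{alg_1} become one-line reductions (using \ref{IMM*_0}, \ref{IMM*_1}, \ref{IMM*_muu}, \ref{IMM*_distrib}, (\ref{IMMP*1}), (\ref{IMMP4})), and \ref{alg_mu} is a short computation of the same kind.

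For the medial axiom \ref{alg_medial} I would first invoke $p(x,\muu,y)=x\oplus y$ to rewrite it as $p(a_1,c,b_1)\oplus p(a_2,c,b_2)=p(a_1\oplus a_2,c,b_1\oplus b_2)$. Prefixing with $\overline{1}\oplus(-)$ and applying the distribution rule, the left side becomes $[(\overline{c}\cdot a_1)\oplus(c\cdot b_1)]\oplus[(\overline{c}\cdot a_2)\oplus(c\cdot b_2)]$; regrouping by \ref{IMM*_medial} and collecting with left distributivity \ref{IMM*_distrib} turns this into $[\overline{c}\cdot(a_1\oplus a_2)]\oplus[c\cdot(b_1\oplus b_2)]$, which is exactly $\overline{1}\oplus p(a_1\oplus a_2,c,b_1\oplus b_2)$. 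Cancellation then closes the case.

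The homomorphism axiom \ref{alg_homo} is the main obstacle, and it is here that property (\ref{IMMP*4}) earns its keep. Writing $q=p(c_1,c_2,c_3)$, I have $\overline{1}\oplus q=(\overline{c_2}\cdot c_1)\oplus(c_2\cdot c_3)$ by definition, and (\ref{IMMP*4}) supplies the companion relation $\overline{1}\oplus\overline{q}=(\overline{c_2}\cdot\overline{c_1})\oplus(c_2\cdot\overline{c_3})$ that controls $\overline{q}$. Expanding $\overline{1}\oplus p(a,q,b)$ by (\ref{formula_IMM2Mobi}) gives $(\overline{q}\cdot a)\oplus(q\cdot b)$, while expanding $\overline{1}\oplus p(p(a,c_1,b),c_2,p(a,c_3,b))$ gives $(\overline{c_2}\cdot p(a,c_1,b))\oplus(c_2\cdot p(a,c_3,b))$. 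To match these I would multiply the defining relations for $p(a,c_1,b)$ and $p(a,c_3,b)$ on the left by $\overline{c_2}$ and $c_2$ (the terms $\overline{c_2}\cdot\overline{1}$ and $c_2\cdot\overline{1}$ collapsing to $\overline{1}$ by \ref{IMM*_0}), regroup by \ref{IMM*_medial}, and factor with right distributivity \ref{IMM*_distrib}; after one more use of the distribution rule both expressions reduce to the common form $[(\overline{1}\oplus\overline{q})\cdot a]\oplus[(\overline{1}\oplus q)\cdot b]$, and two applications of \ref{IMM*_cancel} yield \ref{alg_homo}. The one delicate point is the bookkeeping that identifies the coefficients of $a$ and $b$ with $\overline{1}\oplus\overline{q}$ and $\overline{1}\oplus q$ --- which is exactly what (\ref{IMMP*4}) was proved in advance to make possible; everything else is routine associativity and distributivity.
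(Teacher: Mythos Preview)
Your proposal is correct and follows essentially the same route as the paper: define $p$ by cancellation from (\ref{def_p}), then verify \ref{alg_mu}--\ref{alg_medial} by prefixing with $\overline{1}\oplus(-)$, unfolding via (\ref{formula_IMM2Mobi}), and reducing to IMM* identities, with (\ref{IMMP*4}) supplying the crucial control of $\overline{q}$ in the proof of \ref{alg_homo}. The one small streamlining you add is to establish $p(a,\muu,b)=a\oplus b$ once and for all, which lets you dispatch \ref{alg_cancel} as a literal instance of \ref{IMM*_cancel} and rewrite \ref{alg_medial} directly as an $\oplus$-identity; the paper instead keeps the $\muu$-slot explicit throughout its computation of \ref{alg_medial}, but the underlying manipulations (mediality plus distributivity, then cancellation) are identical.
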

\begin{proof}
$(ii)$ implies $(i)$ because, for each $a, b, c \in A$, $p(a,b,c)$ exists if $(ii)$ is true and is therefore the solution of the equation (\ref{def_p}). To prove that $(i)$ implies $(ii)$, we will deduce axioms \ref{alg_mu} to \ref{alg_medial} of Definition \ref{mobi_algebra}. This is facilitated by the fact that $\oplus$ is cancellative which means that if we find an element $d$ of $A$ such that $\overline{1}\oplus d=(\overline{b}\cdot a)\oplus( b\cdot c)$, we can conclude that $p(a,b,c)=d$. To prove \ref{alg_mu}, we observe that 
\begin{eqnarray*}
(\overline{(\overline{1}\oplus 1)}\cdot 1)\oplus ((\overline{1}\oplus 1)\cdot\overline{1})=(\overline{1}\oplus 1)\oplus\overline{1}= \overline{1}\oplus (\overline{1}\oplus 1)
\end{eqnarray*}
which means that $p(1,\overline{1}\oplus 1,\overline{1})=\overline{1}\oplus 1$. In a similar way, \ref{alg_01} to \ref{alg_1} are satisfied:
\begin{eqnarray*}
(\overline{a}\cdot \overline{1})\oplus (a\cdot 1)=\overline{1}\oplus a\implies p(\overline{1},a,1)=a\\
(\overline{b}\cdot a)\oplus (b\cdot a)=(\overline{b}\oplus b)\cdot a=\overline{1}\oplus a\implies p(a,b,a)=a\\
(\overline{\overline{1}}\cdot a)\oplus (\overline{1}\cdot b)=(1\cdot a)\oplus\overline{1}=\overline{1}\oplus a\implies p(a,\overline{1},b)=a\\
(\overline{1}\cdot a)\oplus (1\cdot b)=\overline{1}\oplus b\implies p(a,1,b)=b.
\end{eqnarray*}
\ref{alg_cancel} is guaranteed by \ref{IMM*_cancel}. To prove \ref{alg_homo}, we first observe that
\begin{eqnarray*}
\overline{1}\oplus p(c_1,c_2,c_3)&=&\overline{c_2}\cdot c_1\oplus c_2\cdot c_3\\
\overline{1}\oplus p(a,c_1,b)&=&\overline{c_1}\cdot a\oplus c_1\cdot b\\
\overline{1}\oplus p(a,c_3,b)&=&\overline{c_3}\cdot a\oplus c_3\cdot b
\end{eqnarray*}
and, using (\ref{IMMP*4}):
\begin{eqnarray*}
\overline{1}\oplus \overline{p(c_1,c_2,c_3)}&=&\overline{c_2}\cdot \overline{c_1}\oplus c_2\cdot \overline{c_3}.
\end{eqnarray*}
Then, we use these relations, as well as the axioms of an IMM*, to transform an obvious identity into \ref{alg_homo}:
\begin{eqnarray*}
&&(\overline{c_2}\cdot \overline{c_1}\cdot a\oplus c_2\cdot\overline{c_3}\cdot a)\oplus
(\overline{c_2}\cdot c_1\cdot b\oplus c_2\cdot c_3\cdot b)=\\
&&(\overline{c_2}\cdot \overline{c_1}\cdot a\oplus c_2\cdot\overline{c_3}\cdot a)\oplus
(\overline{c_2}\cdot c_1\cdot b\oplus c_2\cdot c_3\cdot b)\\
&\Leftrightarrow&
(\overline{c_2}\cdot \overline{c_1}\cdot a\oplus \overline{c_2}\cdot c_1\cdot b)\oplus
(c_2\cdot\overline{c_3}\cdot a\oplus c_2\cdot c_3\cdot b)=\\
&&(\overline{c_2}\cdot \overline{c_1}\cdot a\oplus c_2\cdot\overline{c_3}\cdot a)\oplus
(\overline{c_2}\cdot c_1\cdot b\oplus c_2\cdot c_3\cdot b)\\
&\Leftrightarrow&
\overline{c_2}\cdot (\overline{c_1}\cdot a\oplus c_1\cdot b)\oplus
c_2\cdot(\overline{c_3}\cdot a\oplus c_3\cdot b)=\\
&&(\overline{c_2}\cdot \overline{c_1}\oplus c_2\cdot\overline{c_3})\cdot a\oplus
(\overline{c_2}\cdot c_1\oplus c_2\cdot c_3)\cdot b\\
&\Leftrightarrow&
\overline{c_2}\cdot (\overline{1}\oplus p(a,c_1,b))\oplus
c_2\cdot(\overline{1}\oplus p(a,c_3,b))=\\
&&(\overline{1}\oplus \overline{p(c_1,c_2,c_3)})\cdot a\oplus
(\overline{1}\oplus p(c_1,c_2,c_3))\cdot b\\
&\Leftrightarrow&
(\overline{1}\oplus\overline{c_2}\cdot p(a,c_1,b))\oplus
(\overline{1}\oplus c_2\cdot p(a,c_3,b))=\\
&&(\overline{1}\oplus \overline{p(c_1,c_2,c_3)}\cdot a)\oplus
(\overline{1}\oplus p(c_1,c_2,c_3)\cdot b)\\
&\Leftrightarrow&
\overline{1}\oplus(\overline{c_2}\cdot p(a,c_1,b)\oplus c_2\cdot p(a,c_3,b))=\\
&&\overline{1}\oplus (\overline{p(c_1,c_2,c_3)}\cdot a\oplus p(c_1,c_2,c_3)\cdot b)\\
&\Leftrightarrow&
p(p(a,c_1,b),c_2,p(a,c_3,b))=p(a,p(c_1,c_2,c_3), b).
\end{eqnarray*}
The proof of \ref{alg_medial} is similar. To write the proof in a concise way, let's use the notation $\overline{1}\oplus 1=\muu$ and recall that (\ref{IMMP5}) reads $\muu\cdot a=a\cdot\muu$ for all $a$ and (\ref{IMMP2}) means $\overline{\muu}=\muu$.
\begin{eqnarray*}
&&(\overline{c}\cdot \muu \cdot a_1\oplus c\cdot \muu\cdot b_1)\oplus
(\overline{c}\cdot \muu \cdot a_2\oplus c\cdot \muu\cdot b_2)=\\
&&(\muu\cdot\overline{c}\cdot a_1  \oplus \muu\cdot c\cdot b_1)\oplus
(\muu\cdot\overline{c}\cdot a_2  \oplus \muu\cdot c\cdot b_2)\\
&\Leftrightarrow&
(\overline{c}\cdot \muu \cdot a_1\oplus \overline{c}\cdot \muu \cdot a_2)\oplus
(c\cdot \muu\cdot b_1\oplus c\cdot \muu\cdot b_2)=\\
&&(\muu\cdot\overline{c}\cdot a_1  \oplus \muu\cdot c\cdot b_1)\oplus
(\muu\cdot\overline{c}\cdot a_2  \oplus \muu\cdot c\cdot b_2)\\
&\Leftrightarrow&
\overline{c}\cdot (\muu \cdot a_1\oplus \muu \cdot a_2)\oplus
c\cdot (\muu\cdot b_1\oplus\muu\cdot b_2)=\\
&&\muu\cdot(\overline{c}\cdot a_1  \oplus c\cdot b_1)\oplus
\muu\cdot(\overline{c}\cdot a_2  \oplus c\cdot b_2)\\
&\Leftrightarrow&
\overline{c}\cdot (\overline{1}\oplus p(a_1,\muu,a_2))\oplus
c\cdot (\overline{1}\oplus p(b_1,\muu, b_2))=\\
&&\muu\cdot(\overline{1}\oplus p(a_1,c,b_1))\oplus
\muu\cdot(\overline{1}\oplus p(a_2,c,b_2))\\
&\Leftrightarrow&
\overline{1}\oplus (\overline{c}\cdot p(a_1,\muu,a_2)\oplus
c\cdot p(b_1,\muu, b_2))=\\
&&\overline{1}\oplus(\muu\cdot p(a_1,c,b_1)\oplus
\muu\cdot p(a_2,c,b_2))\\
&\Leftrightarrow&
p(p(a_1,\muu,a_2),c,p(b_1,\muu, b_2))=
p(p(a_1,c,b_1),\muu,p(a_2,c,b_2)).
\end{eqnarray*}
\end{proof}

An IMM algebra in which $\overline{1}\oplus 1$ is invertible is an IMM* algebra. Moreover, $x=(\overline{1}\oplus 1)^{-1}\cdot((\overline{b}\cdot a)\oplus( b\cdot c))$ is a solution to (\ref{def_p}). Hence it gives rise to a mobi algebra.
\begin{corollary}\label{corollary_theorem_3}
Let $(A,\overline{()},\oplus, \cdot, 1)$ be an IMM algebra (or an IMM* algebra). If the element $\overline{1}\oplus 1$ is invertible, then there exists a unique mobi algebra $(A,p,\overline{1},\overline{1}\oplus 1,1)$ such that:
\begin{eqnarray}
p(a, b, c)=(\overline{1}\oplus 1)^{-1}\cdot((\overline{b}\cdot a)\oplus( b\cdot c)).
\end{eqnarray}
\end{corollary}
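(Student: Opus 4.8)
The plan is to reduce the statement to Theorem \ref{Main_theorem_3} by checking its two hypotheses in turn: first, that an IMM algebra with invertible $\overline{1}\oplus 1$ is automatically an IMM* algebra, so that $\oplus$ cancels; and second, that the displayed element $x=(\overline{1}\oplus 1)^{-1}\cdot((\overline{b}\cdot a)\oplus(b\cdot c))$ is a solution of equation (\ref{def_p}). Throughout I would write $\muu=\overline{1}\oplus 1$ and $\two=(\overline{1}\oplus 1)^{-1}$, as in the proof of Theorem \ref{Main_theorem_1}, so that $\muu\cdot\two=\two\cdot\muu=1$.

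For the first hypothesis, I would invoke Theorem \ref{Main_theorem_1}: since $\muu$ is invertible, its inverse $\two$ solves $\muu\cdot x=1$, so condition $(i)$ of that theorem holds and we obtain the ring $(A,+,\cdot,\overline{1},1)$ with $a+b=\two\cdot(a\oplus b)$. Ring addition cancels by (\ref{ring_cancel}); hence if $a\oplus b=a'\oplus b$ then applying $\two\cdot(-)$ gives $a+b=a'+b$, whence $a=a'$. This is exactly axiom \ref{IMM*_cancel}, so the IMM algebra is in fact an IMM* algebra (when we start from an IMM* algebra this step is vacuous). For the second hypothesis, I would show $\overline{1}\oplus x = (\overline{b}\cdot a)\oplus(b\cdot c)$ directly: writing $w=(\overline{b}\cdot a)\oplus(b\cdot c)$ so that $x=\two\cdot w$, property (\ref{IMMP4}) applied to $x$ reads $\muu\cdot x=\overline{1}\oplus x$, while associativity gives $\muu\cdot x=\muu\cdot\two\cdot w=w$; comparing the two yields $\overline{1}\oplus x=w$, as required.

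With both hypotheses verified, Theorem \ref{Main_theorem_3} ($(i)\Rightarrow(ii)$) provides a unique mobi algebra $(A,p,\overline{1},\overline{1}\oplus 1,1)$ satisfying $\overline{1}\oplus p(a,b,c)=(\overline{b}\cdot a)\oplus(b\cdot c)$. Finally, since $x$ satisfies the same equation $\overline{1}\oplus x=(\overline{b}\cdot a)\oplus(b\cdot c)$, cancellativity of $\oplus$ forces $p(a,b,c)=x=(\overline{1}\oplus 1)^{-1}\cdot((\overline{b}\cdot a)\oplus(b\cdot c))$, which is the claimed formula. The only step that is not mere bookkeeping is the passage from invertibility of $\overline{1}\oplus 1$ to cancellativity of $\oplus$; I expect this to be the main point, and the cleanest route is the reduction to ring cancellation via Theorem \ref{Main_theorem_1} rather than a hands-on manipulation of $\oplus$.
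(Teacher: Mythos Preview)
Your proof is correct and follows essentially the same route as the paper: verify that invertibility of $\overline{1}\oplus 1$ makes the IMM algebra an IMM* algebra, use property (\ref{IMMP4}) to check that the displayed $x$ solves (\ref{def_p}), and then invoke Theorem \ref{Main_theorem_3}. The paper's proof is a one-line citation of Theorem \ref{Main_theorem_3} and (\ref{IMMP4}), with the cancellativity claim stated but not argued in the text preceding the corollary; your detour through Theorem \ref{Main_theorem_1} and ring cancellation (\ref{ring_cancel}) is a clean way to justify that step and is entirely in the spirit of what the paper leaves implicit.
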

\begin{proof}
It is a consequence of Theorem \ref{Main_theorem_3} and property (\ref{IMMP4}).
\end{proof}

The previous results show the connection between mobi algebras and IMM algebras, or IMM* algebras. The case when the monoid part $(A,\cdot,1)$ of an IMM algebra is a commutative monoid has an interesting reflection on the axiom \ref{alg_medial}. Instead of having it restricted to the element $\muu$,
$$p(p(a_1,c,b_1),\muu,p(a_2,c,b_2))=p(p(a_1,\muu,a_2),c,p(b_1,\muu,b_2))$$
it holds for an arbitrary element as shown in the following proposition.
 
\begin{proposition}\label{commutativeA8}
Let $(A,\overline{()},\oplus, \cdot, 1)$ be an IMM* algebra satisfying condition (i) of Theorem \ref{Main_theorem_3} and suppose that $(A,p,\overline{1},\overline{1}\oplus 1,1)$ is its corresponding mobi algebra. The monoid $(A,\cdot,1)$ is a commutative monoid if and only if 
\begin{equation}\label{fullA8}
p(p(a_1,c,b_1),d,p(a_2,c,b_2))=p(p(a_1,d,a_2),c,p(b_1,d,b_2))
\end{equation}
for all $a_1,b_1,a_2,b_2,c,d\in A$.
\end{proposition}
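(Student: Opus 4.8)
The plan is to reduce the identity (\ref{fullA8}), which equates two $p$-values, to a plain identity in the operations $\oplus$, $\cdot$ and $\overline{()}$, and then to read commutativity off that identity. The engine for the reduction is the defining relation (\ref{formula_IMM2Mobi}) together with the cancellativity \ref{IMM*_cancel} of $\oplus$: two $p$-values agree exactly when their right-hand sides $(\overline{b}\cdot a)\oplus(b\cdot c)$ agree. Writing $u=p(a_1,c,b_1)$, $v=p(a_2,c,b_2)$ and $u'=p(a_1,d,a_2)$, $v'=p(b_1,d,b_2)$, relation (\ref{formula_IMM2Mobi}) gives $\overline{1}\oplus p(u,d,v)=(\overline{d}\cdot u)\oplus(d\cdot v)$ for the left member of (\ref{fullA8}) and $\overline{1}\oplus p(u',c,v')=(\overline{c}\cdot u')\oplus(c\cdot v')$ for the right member, so by \ref{IMM*_cancel} it suffices to compare these two right-hand sides.

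The key computational device is a \emph{peeling} step. One cannot distribute $\overline{d}$ directly into the $p$-value $u$, but one can prepend $\overline{1}\oplus(-)$ once more: starting from $\overline{1}\oplus u=(\overline{c}\cdot a_1)\oplus(c\cdot b_1)$, left-multiplying by $\overline{d}$ and using left-distributivity \ref{IMM*_distrib} together with $\overline{d}\cdot\overline{1}=\overline{1}$ from \ref{IMM*_0} yields $\overline{1}\oplus(\overline{d}\cdot u)=(\overline{d}\cdot\overline{c}\cdot a_1)\oplus(\overline{d}\cdot c\cdot b_1)$, and analogously for the other three products. Then, using $\overline{1}=\overline{1}\oplus\overline{1}$ (\ref{IMM*_idem}) and the medial law \ref{IMM*_medial} to split the outer sum, I obtain
\[
\overline{1}\oplus\big((\overline{d}\cdot u)\oplus(d\cdot v)\big)
=\big[(\overline{d}\cdot\overline{c}\cdot a_1)\oplus(\overline{d}\cdot c\cdot b_1)\big]
\oplus\big[(d\cdot\overline{c}\cdot a_2)\oplus(d\cdot c\cdot b_2)\big],
\]
while the right member of (\ref{fullA8}) reduces in the same way to $\big[(\overline{c}\cdot\overline{d}\cdot a_1)\oplus(\overline{c}\cdot d\cdot a_2)\big]\oplus\big[(c\cdot\overline{d}\cdot b_1)\oplus(c\cdot d\cdot b_2)\big]$. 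Note these reductions use \emph{no} commutativity.

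For the forward implication, assuming $(A,\cdot,1)$ commutative, the two fourfold sums have the same four summands ($\overline{c}\cdot\overline{d}=\overline{d}\cdot\overline{c}$, $\overline{c}\cdot d=d\cdot\overline{c}$, etc.), differing only in their pairing; a single application of the medial law \ref{IMM*_medial} matches the groupings, so the two displayed expressions are equal. Cancelling $\overline{1}$ twice via \ref{IMM*_cancel} then gives $(\overline{d}\cdot u)\oplus(d\cdot v)=(\overline{c}\cdot u')\oplus(c\cdot v')$, and one more cancellation delivers (\ref{fullA8}). For the converse, I would assume (\ref{fullA8}), which forces the two fourfold expressions above to coincide, and specialize $a_1=a_2=b_1=\overline{1}$, $b_2=1$: by \ref{IMM*_0} and \ref{IMM*_1} every summand collapses to $\overline{1}$ except the last one, leaving $\overline{1}\oplus\big(\overline{1}\oplus(d\cdot c)\big)=\overline{1}\oplus\big(\overline{1}\oplus(c\cdot d)\big)$, so two cancellations yield $c\cdot d=d\cdot c$.

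I expect the main obstacle to be bookkeeping rather than conceptual: $\oplus$ is \emph{not} associative, so one may never freely regroup a fourfold sum and must invoke the medial law \ref{IMM*_medial} explicitly at each regrouping, keeping careful track of which of the transpositions is legitimate. The one genuinely delicate point is the peeling step, which is what lets distributivity reach inside a $p$-value at the cost of an extra $\overline{1}\oplus(-)$; once that is in place, both directions are forced by repeated cancellation.
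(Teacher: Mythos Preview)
Your proposal is correct and follows essentially the same route as the paper: both reduce the identity (\ref{fullA8}), via the defining relation (\ref{formula_IMM2Mobi}) and the peeling trick $\overline{1}\oplus(x\cdot u)=x\cdot(\overline{1}\oplus u)$, to the equality of two fourfold $\oplus$-expressions in the monomials $\overline{c}\,\overline{d}\,a_1,\,\overline{d}\,c\,b_1,\,d\,\overline{c}\,a_2,\,d\,c\,b_2$, and then invoke mediality plus cancellativity. The only noteworthy difference is in the converse direction: the paper bypasses the fourfold reduction and argues directly in the mobi algebra, writing $a\cdot b=p(\overline{1},a,b)=p(p(\overline{1},b,\overline{1}),a,p(\overline{1},b,1))$ via \ref{alg_idem} and \ref{alg_01}, and then applying (\ref{fullA8}) to swap $a$ and $b$; your specialization $a_1=a_2=b_1=\overline{1}$, $b_2=1$ is exactly this same substitution, just read through the $\oplus$-picture rather than the $p$-picture.
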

\begin{proof}
If (\ref{fullA8}) is a property of the mobi algebra, then: 
\begin{eqnarray*}
a\cdot b &=&p(0,a,b)\\
         &=&p(p(0,b,0),a,p(0,b,1))\\
         &=&p(p(0,a,0),b,p(0,a,1))\\
         &=&p(0,b,a)\\
         &=&b\cdot a.
\end{eqnarray*}
Conversely and considering (\ref{formula_IMM2Mobi}), if $a\cdot b=b\cdot a$, for all $a,b\in A$ in the IMM*, then: 
\begin{eqnarray*}
&&((\overline{d}\cdot \overline{c}\cdot a_1)\oplus (\overline{d}\cdot c\cdot b_1))\oplus 
   ((d\cdot \overline{c}\cdot a_2)\oplus (d\cdot c \cdot b_2))=\\
&&((\overline{c}\cdot \overline{d}\cdot a_1)\oplus (c\cdot\overline{d}\cdot b_1))\oplus 
   ((\overline{c}\cdot d\cdot a_2)\oplus (c\cdot d \cdot b_2))\\
&\implies&
   ((\overline{d}\cdot \overline{c}\cdot a_1)\oplus (\overline{d}\cdot c\cdot b_1))\oplus 
   ((d\cdot \overline{c}\cdot a_2)\oplus (d\cdot c \cdot b_2))=\\
&&((\overline{c}\cdot \overline{d}\cdot a_1)\oplus (\overline{c}\cdot d\cdot a_2))\oplus
   ((c\cdot\overline{d}\cdot b_1)\oplus (c\cdot d \cdot b_2))\\
&\implies&
(\overline{d}\cdot ((\overline{c}\cdot a_1)\oplus (c\cdot b_1)))\oplus 
   (d\cdot ((\overline{c}\cdot a_2)\oplus (c \cdot b_2)))=\\
&&(\overline{c}\cdot ((\overline{d}\cdot a_1)\oplus (d\cdot a_2)))\oplus
   (c\cdot((\overline{d}\cdot b_1)\oplus (d \cdot b_2)))\\
&\implies&
(\overline{d}\cdot (\overline{1}\oplus p(a_1,c,b_1)))\oplus 
   (d\cdot (\overline{1}\oplus p(a_2,c,b_2)))=\\
&&(\overline{c}\cdot (\overline{1}\oplus p(a_1,d,a_2)))\oplus
   (c\cdot(\overline{1}\oplus p(b_1,d,b_2)))\\
&\implies&
(\overline{1}\oplus(\overline{d}\cdot p(a_1,c,b_1)))\oplus 
   (\overline{1}\oplus(d\cdot p(a_2,c,b_2)))=\\
&&(\overline{1}\oplus(\overline{c}\cdot p(a_1,d,a_2)))\oplus
   (\overline{1}\oplus(c\cdot p(b_1,d,b_2)))\\
&\implies&
\overline{1}\oplus((\overline{d}\cdot p(a_1,c,b_1))\oplus(d\cdot p(a_2,c,b_2)))=\\
&&\overline{1}\oplus((\overline{c}\cdot p(a_1,d,a_2))\oplus(c\cdot p(b_1,d,b_2)))\\
&\implies&
\overline{1}\oplus(\overline{1}\oplus p(p(a_1,c,b_1),d,p(a_2,c,b_2)))=\\
&&\overline{1}\oplus(\overline{1}\oplus p(p(a_1,d,a_2),c,p(b_1,d,b_2)))\\
&\implies&
p(p(a_1,c,b_1),d,p(a_2,c,b_2))=p(p(a_1,d,a_2),c,p(b_1,d,b_2)).
\end{eqnarray*}
\end{proof}

\section{Mobi algebras and unitary rings with one half}\label{sec_mobiring}

We have seen the passage from mobi algebras to IMM(*) algebras and back (Proposition \ref{mobi2IMM} and Theorem \ref{Main_theorem_3}), as well as the passage from IMM algebras to unitary rings and back (Theorem \ref{Main_theorem_1} and Theorem \ref{Main_theorem_2}). Here we make explicit the fact that there is a straightforward connection between mobi algebras in which $\muu$ is invertible and unitary rings in which 2 is invertible.

\begin{theorem}\label{THM-mobi2ring}
Let  $(A,p,0,\muu,1)$ be a mobi algebra on the set $A$ and let $2\in A$ be a distinguished element on that set.\newline The structure $(A,+,\cdot,0,1)$, with $a\cdot b=p(0,a,b)$ and $a+b=2\cdot p(a,\muu,b)$, is a unitary ring if and only if $2$ is the inverse of $\muu$.
\end{theorem}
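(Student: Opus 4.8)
The plan is to route everything through the derived IMM structure and Theorem \ref{Main_theorem_1}. By Proposition \ref{mobi2IMM}, the operations $\overline{a}=p(1,a,0)$, $a\cdot b=p(0,a,b)$ and $a\oplus b=p(a,\muu,b)$ make $(A,\overline{()},\oplus,\cdot,1)$ an IMM algebra with $\overline{1}=0$ and $\overline{1}\oplus 1=\muu$. In these terms the multiplication in the statement is the IMM product, while the addition is $a+b=2\cdot(a\oplus b)$. The crux of the proof is the observation that this is \emph{exactly} the ring addition produced by Theorem \ref{Main_theorem_1}, precisely when $2$ plays the role of $(\overline{1}\oplus 1)^{-1}=\muu^{-1}$.

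For the implication that $2=\muu^{-1}$ suffices, I would argue as follows. If $2\cdot\muu=1=\muu\cdot 2$, then $\overline{1}\oplus 1=\muu$ is invertible, so condition $(i)$ of Theorem \ref{Main_theorem_1} holds (the equation $(\overline{1}\oplus 1)\cdot x=1$ has the solution $x=2$). Part $(ii)$ of that theorem then yields a unitary ring $(A,+,\cdot,\overline{1},1)$ whose addition is $(\overline{1}\oplus 1)^{-1}\cdot(a\oplus b)=2\cdot p(a,\muu,b)$; since $\overline{1}=0$ and the multiplication and units coincide with those in the statement, this ring is literally the structure $(A,+,\cdot,0,1)$ under consideration, which is therefore a unitary ring.

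For the converse, I would assume that $(A,+,\cdot,0,1)$ is a unitary ring and extract the inverse from a single axiom. Indeed, evaluating the additive-identity axiom \ref{R3} at $1$ gives $1=1+0=2\cdot p(1,\muu,0)=2\cdot\muu$, where the last equality is axiom \ref{alg_mu}. Thus $2\cdot\muu=1$. Since $\muu=\overline{1}\oplus 1$ is central in the monoid $(A,\cdot,1)$ by property (\ref{IMMP5}), a one-sided inverse is automatically two-sided, so $\muu\cdot 2=2\cdot\muu=1$ and $2$ is the inverse of $\muu$, as required.

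I do not expect a genuine obstacle here, since all the laborious verification of the ring axioms is already packaged inside Theorem \ref{Main_theorem_1}. The only two points requiring care are matching the two addition formulas in the forward direction (which forces the identification $2=\muu^{-1}$), and, in the converse, recognizing that a \emph{single} ring axiom together with the centrality of $\muu$ is enough to upgrade $2\cdot\muu=1$ into a genuine two-sided inverse.
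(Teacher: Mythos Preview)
Your proof is correct and follows essentially the same route as the paper: both pass through Proposition~\ref{mobi2IMM} to obtain the IMM structure and then invoke Theorem~\ref{Main_theorem_1} for the forward direction, and both extract $2\cdot\muu=1$ from a single instance of the addition formula for the converse. The only cosmetic differences are that the paper evaluates at $0+1$ using \ref{alg_01} while you evaluate at $1+0$ using \ref{alg_mu}, and you are slightly more explicit in upgrading the one-sided identity to a two-sided inverse via the centrality of $\muu$.
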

\begin{proof}
If $(A,+,\cdot,0,1)$ is a unitary ring, with $a+b=2\cdot p(a,\muu,b)$, then, in particular, $0+1=2\cdot p(0,\muu,1)$ which implies 
$1=2\cdot \muu$ proving that $2$ is the inverse of $\muu$. Conversely, we begin by using Proposition \ref{mobi2IMM} to obtain, from the mobi, an IMM structure $(A,\overline{()},\oplus, \cdot, 1)$ in which $\muu=\overline{1}\oplus 1$, $a\cdot b=p(0,a,b)$ and $a\oplus b=p(a,\muu,b)$. Within this structure, if $2$ is the inverse of $\muu$, we have, by Theorem \ref{Main_theorem_1}, that $a\oplus b=\muu\cdot (a+b)$ which is equivalent to $a+b=2\cdot p(a,\muu,b)$. 
\end{proof}

\begin{theorem}\label{THM-ring2mobi}
Let  $(A,+,\cdot,0,1)$ be a unitary ring with a distinguished element $\muu\in A$.\newline The structure $(A,p,0,\muu,1)$, 
with $p(a,b,c)=a+bc-ba$, is a mobi algebra if and only if $\muu$ is the inverse of $1+1$.
\end{theorem}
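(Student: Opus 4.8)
The plan is to treat the two implications separately: the forward (``only if'') direction is immediate from a single axiom, while the reverse (``if'') direction is most economically obtained by composing the correspondences already established, rather than by a bare-hands axiom check.

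For the forward direction I would assume that $(A,p,0,\muu,1)$ with $p(a,b,c)=a+bc-ba$ is a mobi algebra and extract the required identity from axiom \ref{alg_mu} alone. Evaluating the formula gives $p(1,\muu,0)=1+\muu\cdot 0-\muu\cdot 1=1-\muu$, and since \ref{alg_mu} demands $p(1,\muu,0)=\muu$, this forces $1=\muu+\muu$. As $\muu+\muu=(1+1)\muu=\muu(1+1)$ by distributivity, I conclude that $\muu$ is a two-sided inverse of $1+1$.

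For the reverse direction I would assume $\muu=(1+1)^{-1}$ and first record the two consequences that drive everything: $1-\muu=\muu$ (equivalently $\muu+\muu=1$), and that $\muu$ is central, since $1+1$ commutes with every element and hence so does its inverse, by the reasoning behind \eqref{prop2-1}. The cleanest route then avoids verifying the eight axioms directly and instead reuses the machinery of the previous sections. By Theorem \ref{Main_theorem_2}, the ring $(A,+,\cdot,0,1)$ with $1+1$ invertible determines a unique IMM algebra with $\overline{a}=1-a$, $a\oplus b=\muu(a+b)$, $\overline{1}=0$ and $\overline{1}\oplus 1=\muu$. Since $\overline{1}\oplus 1=\muu$ is invertible, Corollary \ref{corollary_theorem_3} produces a unique mobi algebra $(A,p,0,\muu,1)$ with $p(a,b,c)=(\overline{1}\oplus 1)^{-1}\cdot((\overline{b}\cdot a)\oplus(b\cdot c))$. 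Substituting $\overline{b}=1-b$, $(\overline{1}\oplus 1)^{-1}=1+1$ and $((1-b)a)\oplus(bc)=\muu(a-ba+bc)$, I obtain $p(a,b,c)=(1+1)\muu(a-ba+bc)=a+bc-ba$, which is exactly the prescribed formula with the constants $0,\muu,1$ correctly identified.

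The main obstacle, should one prefer a self-contained direct verification over the composition above, lies in axioms \ref{alg_homo} and \ref{alg_medial}: these require expanding iterated applications of $p$ into sums of (noncommutative) monomials and checking term-by-term cancellation, keeping the left/right placement of factors straight. The homogeneity axiom \ref{alg_homo} turns out to hold from the ring axioms alone, whereas \ref{alg_medial} genuinely uses $1-\muu=\muu$ together with the centrality of $\muu$ to collapse both sides to $\muu(a_1+a_2+cb_1+cb_2-ca_1-ca_2)$. The cancellation axiom \ref{alg_cancel} is where one inverts $\muu$ directly: from $\muu a=\muu a'$ one recovers $a=a'$ by left-multiplying by $1+1$. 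Either way the substantive content reduces to these bookkeeping checks, which the composition route simply packages inside Theorem \ref{Main_theorem_2} and Corollary \ref{corollary_theorem_3}.
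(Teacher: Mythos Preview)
Your proof is correct and follows essentially the same route as the paper: the forward direction from axiom \ref{alg_mu} alone, and the reverse direction via Theorem \ref{Main_theorem_2} followed by Corollary \ref{corollary_theorem_3}, with the same final computation showing the resulting $p$ is $a+bc-ba$. Your additional paragraph sketching a direct verification is extra commentary but does not diverge from the paper's strategy.
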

\begin{proof}
If $(A,p,0,\muu,1)$ is a mobi algebra, with $p(a,b,c)=a+bc-ba$, then axiom \ref{alg_mu}, $p(1,\muu,0)=\muu$, reads $1-\muu=\muu$, i.e. $(1+1)\cdot \muu=1$. On the other hand, if $\muu$ is the inverse of $1+1$, Theorem \ref{Main_theorem_2} gives us an IMM algebra $(A,\overline{()},\oplus, \cdot, 1)$ in which $\overline{b}=1-b$ and $a+b= (1+1)\cdot (a\oplus b)$. This implies, in particular, that 
$1=(1+1)\cdot(\overline{1}\oplus 1)$. Hence, through Corollary \ref{corollary_theorem_3}, we conclude that $(A,p,0,\muu,1)$ is a mobi algebra, with $p(a, b, c)=(\overline{1}\oplus 1)^{-1}\cdot(((1-b)\cdot a)\oplus( b\cdot c))=(a-ba)+bc$ . 
\end{proof}

\begin{theorem}\label{THM-mobi-ring} 
There is a bijective correspondence between unitary rings containing the element $2^{-1}$ and mobi algebras containing $2$.
\end{theorem}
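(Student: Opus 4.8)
The plan is to realise the asserted bijection as the pair of constructions already furnished by Theorems \ref{THM-ring2mobi} and \ref{THM-mobi2ring}, and then to check that the two round-trips are the identity. First I would fix the dictionary between the two sides. A unitary ring $(A,+,\cdot,0,1)$ in which $1+1$ is invertible carries the canonically determined element $2^{-1}=(1+1)^{-1}$ (unique by uniqueness of monoid inverses), while a mobi algebra $(A,p,0,\muu,1)$ in which $\muu$ is invertible carries the canonically determined element $2$ solving $p(0,\muu,2)=1$ (unique by \ref{alg_cancel}); thus ``containing $2^{-1}$'' and ``containing $2$'' are genuine properties, not extra data. Define $\Phi$ by sending such a ring to the mobi algebra with $\muu=2^{-1}$ and $p(a,b,c)=a+bc-ba$ (Theorem \ref{THM-ring2mobi}), and $\Psi$ by sending such a mobi algebra to the ring with $a\cdot b=p(0,a,b)$ and $a+b=2\cdot p(a,\muu,b)$ (Theorem \ref{THM-mobi2ring}). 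Both fix the underlying set and the constants $0,1$, and on each target side the distinguished inverse is forced: in $\Psi(M)$ one computes $1+1=2\cdot p(1,\muu,1)=2$ by \ref{alg_idem}, so $(1+1)^{-1}=\muu$, and dually in $\Phi(R)$ one has $\muu^{-1}=1+1=2$.

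The Ring$\to$Mobi$\to$Ring round-trip is a short direct computation. Starting from $R$, the mobi multiplication is $p(0,a,b)=0+ab-a\cdot 0=ab$, so $\Psi(\Phi(R))$ has the same product as $R$; and its addition is $2\cdot p(a,\muu,b)=2\bigl(a+\muu b-\muu a\bigr)=2a+b-a=a+b$, using $2\muu=1$. Hence $\Psi\circ\Phi=\mathrm{id}$.

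The Mobi$\to$Ring$\to$Mobi round-trip is the main obstacle, since here one must recover the original ternary operation $p$ from the ring operations of $\Psi(M)$. The key is relation (\ref{p_to_ring}), namely $\muu\cdot p(a,b,c)=(\overline{b}\cdot a)\oplus(b\cdot c)$, together with the identity $u\oplus v=\muu\cdot(u+v)$ that holds in $\Psi(M)$ (this is just a restatement of $a+b=2\cdot p(a,\muu,b)$). Rewriting the $\oplus$ on the right of (\ref{p_to_ring}) and multiplying by $2=\muu^{-1}$ gives, in the ring $\Psi(M)$, the closed form $p(a,b,c)=\overline{b}\cdot a+b\cdot c$, where $\overline{b}=p(1,b,0)$. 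Comparing this with the reconstructed operation $p'(a,b,c)=a+bc-ba$ of $\Phi(\Psi(M))$, one finds $p(a,b,c)-p'(a,b,c)=(\overline{b}+b)\cdot a-a$, so by right-distributivity \ref{R7} the equality $p=p'$ reduces to the single identity $\overline{b}+b=1$ in $\Psi(M)$.

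This last identity is exactly where Proposition \ref{mobi2IMM} is needed: it establishes $\overline{b}\oplus b=\muu$ in the derived structure, whence $\overline{b}+b=2\cdot(\overline{b}\oplus b)=2\muu=1$. Therefore $p'=p$, the constants again agree, and $\Phi\circ\Psi=\mathrm{id}$. Since $\Phi$ and $\Psi$ are mutually inverse and fix underlying sets and constants, they furnish the asserted bijective correspondence; the only delicate bookkeeping is the verification, already noted above, that the auxiliary elements $2$ and $2^{-1}$ are uniquely determined on each side.
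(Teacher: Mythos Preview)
Your proposal is correct and follows essentially the same route as the paper: both directions use the constructions of Theorems \ref{THM-ring2mobi} and \ref{THM-mobi2ring}, the Ring$\to$Mobi$\to$Ring round-trip is verified by the same direct computation, and the Mobi$\to$Ring$\to$Mobi round-trip is reduced, via relation (\ref{p_to_ring}) and $u\oplus v=\muu\cdot(u+v)$, to the identity $\overline{b}+b=1$, which is obtained from $\overline{b}\oplus b=\muu$ (Proposition \ref{mobi2IMM}). Your added remarks on the uniqueness of $2$ and $2^{-1}$ are accurate bookkeeping that the paper leaves implicit.
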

\begin{proof}
Let $(A,+,\cdot,0,1)$ be a unitary ring such that $2^{-1}\in A$, with \mbox{$2=1+1$}. Then, by Theorem \ref{THM-ring2mobi}, the system $(A,p,0,2^{-1},1)$ where $p(a,b,c)=a+bc-ba$ is a mobi algebra. This mobi algebra contains $2$ (the inverse of $2^{-1}$) and, consequently, by Theorem \ref{THM-mobi2ring}, it determines a unitary ring $(A,+',\cdot',0,1)$. This ring is identical to the initial ring $(A,+,\cdot,0,1)$ because:
\begin{eqnarray*}
a\cdot'b&=&p(0,a,b)=0+a\cdot b-a\cdot 0=a\cdot b\\
a+'b&=&2\cdot'p(a,\frac{1}{2},b)=2\cdot(a+\frac{1}{2}\cdot b-\frac{1}{2}\cdot a)=a+b.
\end{eqnarray*}
Conversely, let $(A,p,0,\muu,1)$ be a mobi algebra such that  $p(0,\muu,2)=1$ with $2\in A$. Then, by Theorem \ref{THM-mobi2ring}, $(A,+,\cdot,0,1)$ with $a\cdot b=p(0,a,b)$ and $a+b=2\cdot p(a,\muu,b)$ is a unitary ring. This ring contains $\muu$ and, consequently, by Theorem \ref{THM-ring2mobi}, it determines a mobi algebra $(A,p',0,\muu,1)$. This mobi algebra is identical to the initial one. Indeed, by definition of $p'$, we have
\begin{eqnarray*}
p'(a,b,c)=(1-b)\cdot a + b\cdot c=2\cdot p((1-b)\cdot a,\muu,b\cdot c).
\end{eqnarray*}
Then, as shown in the proof of Proposition \ref{mobi2IMM}, we get $p(\overline{b},\muu,b)=\muu$. When 2 exists, this equality may be written as 
$\overline{b}+b=1$, i.e., $1-b=\overline{b}$. Therefore 
\begin{eqnarray*}
p'(a,b,c)=2\cdot p(\overline{b}\cdot a,\muu,b\cdot c)
\end{eqnarray*}
which, using property (\ref{p_to_ring}), implies that $p'(a,b,c)=p(a,b,c)$.
\end{proof}

The finite case is of particular interest because every finite mobi is such that its element $\muu$ is invertible, and hence it is uniquely determined by a unitary ring structure in which $2$ is invertible (see Example 15 in Section \ref{sec_examples}).

\section{conclusion}

We conclude with a schematic diagram relating the algebraic structures considered here and the results that relate them. We use an arrow labelled with the number of the Theorem, Proposition or Corollary where the result is proved on the direction indicated by the arrow. For example, the arrow labelled P.\ref{mobi2IMM}, with source Mobi and target IMM, simply means that the Proposition \ref{mobi2IMM} establishes an effective passage from the algebraic structure of a mobi algebra to the algebraic structure of an IMM algebra. Moreover, we use the name IMM** to designate an IMM* algebra in which condition (i) of Theorem \ref{Main_theorem_3} holds. This structure, by Corollary \ref{corollary_theorem_3}, is clearly in between IMM algebras, in which $\muu$ is invertible (that we are denoting by IMM$_2$), and IMM* algebras. Following the same line, we denote by Ring$_\muu$ the rings in which $2$ is invertible and by Mobi$_2$ the mobi algebras in which $\muu$ is invertible.
\begin{equation}\label{casinha}
\xymatrix{& \text{Ring}_\muu \ar@<.5ex>[ld]^{\textrm{T}.\ref{THM-ring2mobi}} \ar@<.5ex>[rd]^{\textrm{T}.\ref{Main_theorem_2}} \\
\text{Mobi}_2 \ar@<.5ex>[ur]^{\textrm{T}.\ref{THM-mobi2ring}} \ar@<.5ex>[rr]^{}\ar[d] & & \text{IMM}_2 \ar@<.5ex>[ul]^{\textrm{T}.\ref{Main_theorem_1}} \ar@<.5ex>[ll]^{\textrm{C}.\ref{corollary_theorem_3}} \ar[d]\\
\text{Mobi} \ar@<.5ex>[rr] \ar[d]^{\textrm{P}.\ref{mobi2IMM}} & & \text{IMM**} \ar@<.5ex>[ll]^{\textrm{T}.\ref{Main_theorem_3}} \ar[d]\\
\text{IMM}& & \text{IMM*} \ar[ll]}
\end{equation}

\medskip
We observe that in the proof of property P.4.1, the axiom \ref{alg_cancel} is not used which suggests that we could also consider a mobi without this axiom. Representing such a structure by $\text{Mobi}^\dagger$, we get the following additional diagram.
\begin{equation}\label{garagem}
\xymatrix{ \text{Mobi} \ar@<.0ex>[rd] \ar@<.0ex>[rr] & & \text{Mobi}^\dagger \ar@<.0ex>[dl]^{P.4.1}\\
&\text{IMM}&}
\end{equation}
Furthermore, the inclusion $\text{Mobi}\subset\text{Mobi}^{\dagger}$ is strict. Indeed, the example IMM 2 (see Appendix) is an IMM algebra that is obtained from a $\text{Mobi}^\dagger$ that is not a Mobi (axiom \ref{alg_cancel} is not satisfied). Moreover, there are IMM algebras which are not obtained from $\text{Mobi}^\dagger$ as it is shown by example IMM 3 in the Appendix.

Our last comment is that the connection between IMM algebras and rings can be lifted to the more general case of semi-rings. Indeed, if in the definition of IMM algebra we remove the unary operation $\overline{()}$ while keeping the existence of the element $\overline{1}$ such that $\overline{1}\cdot x=\overline{1}$ then, in Theorem \ref{Main_theorem_1}, we can replace rings by semi-rings.

\section*{Appendix}
In this appendix, we present some examples of finite IMM algebras with 5 elements. Let $A=\{\alpha, 0,\muu,1, \oalph\}$ be a set with 5 elements. In the three examples bellow, the unary operation $\overline{()}$ is defined by $\overline{\alpha}=\oalph$, $\overline{0}=1$, $\overline{1}=0$, $\overline{\muu}=\muu$, $\overline{\oalph}=\alpha$.
\begin{enumerate}[label={IMM \arabic*:}]
\item The system $(A,\overline{()},\oplus, \cdot, 1)$, with $\oplus$ and $\cdot$ defined as follows, is an IMM algebra. 
$$
\begin{array}{c|ccccc}
\oplus  & \alpha & 0      & \muu   & 1      & \oalph\\ 
\hline
\alpha  & \alpha & \oalph & 1      & 0      & \muu  \\ 
0       & \oalph & 0      & \alpha & \muu   & 1     \\ 
\muu    & 1      & \alpha & \muu   & \oalph & 0     \\ 
1       & 0      & \muu   & \oalph & 1      & \alpha\\  
\oalph  & \muu   & 1      & 0      & \alpha & \oalph\\ 
\end{array} \quad\quad\quad
\begin{array}{c|ccccc}
\cdot                     & \alpha & 0      & \muu   & 1      & \oalph\\
\hline 
\alpha                    & 1      & 0      & \oalph & \alpha & \muu  \\ 
0                         & 0      & 0      & 0      & 0      & 0     \\ 
\muu                      & \oalph & 0      & \alpha & \muu   & 1     \\ 
1                         & \alpha & 0      & \muu   & 1      & \oalph\\  
\oalph                    & \muu   & 0      & 1      & \oalph & \alpha\\ 
\end{array} 
$$
Defining $p(a,b,c)=\oalph\cdot((\overline{b}\cdot a )\oplus(b\cdot c))$, it can be checked that $(A,p,0,\muu,1)$ is a mobi algebra.

\item The system $(A,\overline{()},\oplus, \cdot, 1)$, with $\oplus$ and $\cdot$ defined as follows, is an IMM algebra. 
$$
\begin{array}{c|ccccc}
\oplus                    & \alpha & 0      & \muu   & 1      & \oalph\\ 
\hline
\alpha                    & \alpha & \muu   & \muu   & \alpha & \muu  \\ 
0                         & \muu   & 0      & \muu   & \muu   & \oalph\\ 
\muu                      & \muu   & \muu   & \muu   & \muu   & \muu  \\ 
1                         & \alpha & \muu   & \muu   & 1      & \muu  \\  
\oalph                    & \muu   & \oalph & \muu   & \muu   & \oalph\\ 
\end{array} \quad\quad\quad
\begin{array}{c|ccccc}
\cdot                     & \alpha & 0      & \muu   & 1      & \oalph\\
\hline 
\alpha                    & \alpha & 0      & \muu   & \alpha & \oalph\\ 
0                         & 0      & 0      & 0      & 0      & 0     \\ 
\muu                      & \muu   & 0      & \muu   & \muu   & \oalph\\ 
1                         & \alpha & 0      & \muu   & 1      & \oalph\\  
\oalph                    & \oalph & 0      & \oalph & \oalph & 0\\ 
\end{array} 
$$
It is obvious that the operation $\oplus$ is not cancellative. Nevetheless, the equation $\muu\cdot p=(\overline{b}\cdot a )\oplus(b\cdot c)$ can be solved for all $a, b, c \in A$. It can be shown that there are solutions $p$ of that equation for which $(A,p,0,\muu,1)$ is a $\text{Mobi}^\dagger$ (a mobi algebra without axiom \ref{alg_cancel}).

\item The system $(A,\overline{()},\oplus, \cdot, 1)$, with $\oplus$ and $\cdot$ defined as follows, is an IMM algebra.
$$
\begin{array}{c|ccccc}
\oplus                    & \alpha & 0      & \muu   & 1      & \oalph\\
\hline
\alpha                    & \alpha & \muu   & \muu   & \muu   & \muu  \\ 
0                         & \muu   & 0      & \muu   & \muu   & \muu  \\ 
\muu                      & \muu   & \muu   & \muu   & \muu   & \muu  \\ 
1                         & \muu   & \muu   & \muu   & 1      & \muu  \\  
\oalph                    & \muu   & \muu   & \muu   & \muu   & \oalph\\ 
\end{array} \quad\quad\quad
\begin{array}{c|ccccc}
\cdot                     & \alpha & 0      & \muu   & 1      & \oalph\\
\hline 
\alpha                    & \oalph & 0      & \muu   & \alpha & 1     \\ 
0                         & 0      & 0      & 0      & 0      & 0     \\ 
\muu                      & \muu   & 0      & \muu   & \muu   & \muu  \\ 
1                         & \alpha & 0      & \muu   & 1      & \oalph\\  
\oalph                    & 1      & 0      & \muu   & \oalph & \alpha\\ 
\end{array} 
$$
This IMM cannot be induced by a mobi algebra because the equation $\muu\cdot p=(\overline{b}\cdot a )\oplus(b\cdot c)$ doesn't have a solution $p$ for all $a,b,c \in A$, in contradiction with (\ref{p_to_ring}). Indeed, for example, the equation $\muu\cdot p=(\overline{\beta}\cdot\beta)\oplus(\beta\cdot\alpha)$ is equivalent to $\muu\cdot p=1$ which does not have a solution.
\end{enumerate}


\begin{thebibliography}{99}

\bibitem{Escardo} 
M. Escardó and A. Simpson, \emph{A universal characterisation of the closed Euclidean interval}, in: Proceedings of 16th Annual IEEE Symposium on Logic in Computer Science (2001) 115–125

\bibitem{ccm_magmas} J. P. Fatelo and N. Martins-Ferreira, \emph{Internal monoids and groups in the category of commutative cancellative medial magmas}, Portugaliae Mathematica, Vol. \textbf{73}, Fasc. 3 (2016) 219-245 .


\bibitem{GTLab59}   N. Martins-Ferreira and J. P. Fatelo, \emph{On Finite Mobi Algebras}, CDRSP-IPLeiria Technical Report, GTLab(Mobi-2) \textbf{ 59} (2015) 1-30.

\bibitem{NMF.08} N. Martins-Ferreira, \emph{Weakly Mal'cev categories}, Theory Appl. Categ.  \textbf{21} (6) (2008)  91--117.

\bibitem{NMF.15a} N. Martins-Ferreira, \emph{New wide classes of weakly Mal'tsev categories}, Appl. Categ. Struct.  \textbf{23} (5) (2015) 741--751.

\bibitem{Jezek} J. Jezek and T. Kepka, \emph{Medial groupoids}, Rozpravy CSAV, Rada mat. a prir. ved 93-2, Academia Praha (1983).

\end{thebibliography}
\end{document}